\title[Existence for Dynamic Programming Principle]{\protect{General Existence of Solutions to Dynamic Programming Principle}}
\author[Q. Liu]{Qing Liu} 
\address{Qing Liu, Department of Mathematics, University of Pittsburgh, Pittsburgh, PA 15260, USA, {\tt qingliu@pitt.edu}}
\author[A. Schikorra]{Armin Schikorra}
\address{Armin Schikorra, Max-Planck Institut MiS Leipzig, Inselstr. 22, 04103 Leipzig, Germany, {\tt armin.schikorra@mis.mpg.de}}
\thanks{A.S. was supported by DAAD fellowship D/12/40670.}
\def\ilap{\Delta_\infty}
\def\eps{\varepsilon}
\def\R{{\mathbb R}}
\newcommand{\tr}{\operatorname{tr}}
\newcommand{\dive}{\operatorname{div}}
\newtheorem{theorem}{Theorem}
\newtheorem{lemma}[theorem]{Lemma}
\newtheorem{corollary}[theorem]{Corollary}
\newtheorem{proposition}[theorem]{Proposition}
\theoremstyle{definition}
\newtheorem{remark}[theorem]{Remark}
\newtheorem{definition}[theorem]{Definition}
\newtheorem{example}[theorem]{Example}
\def\diam{{\rm diam\,}}
\def\dist{{\rm dist\,}}
\def\mvint_#1{\mathchoice
          {\mathop{\vrule width 6pt height 3 pt depth -2.5pt
                  \kern -8pt \intop}\nolimits_{\kern -3pt #1}}%
          {\mathop{\vrule width 5pt height 3 pt depth -2.6pt
                  \kern -6pt \intop}\nolimits_{#1}}%
          {\mathop{\vrule width 5pt height 3 pt depth -2.6pt
                  \kern -6pt \intop}\nolimits_{#1}}%
          {\mathop{\vrule width 5pt height 3 pt depth -2.6pt
                  \kern -6pt \intop}\nolimits_{#1}}}
\numberwithin{theorem}{section} \numberwithin{equation}{section}
\begin{document}

\sloppy


\sloppy


\begin{abstract}

We provide an alternative approach to the existence of solutions to dynamic programming equations arising in the discrete game-theoretic interpretations for various nonlinear partial differential equations including the infinity Laplacian, mean curvature flow and Hamilton-Jacobi type. Our general result is similar to Perron's method but adapted to the discrete situation. 

\end{abstract}

\subjclass[2010]{35A35, 49C20, 91A05, 91A15}
\keywords{Dynamic programming principle, differential games, nonlinear partial differential equations}

\maketitle

\section{Introduction}

Recently, the deterministic or stochastic game-theoretic approach to various nonlinear partial differential equations has attracted a lot of attention; see, for example, a variety of games in \cite{KS1, KS2, PSSW, PS, GL, ASS, PPS, APSS, CGR} and related topics in \cite{LA, MPR1, MPR2, RV}. The results in the literature so far can be summarized in the following way. Consider a partial differential equation in a bounded domain $\Omega\subset \mathbb{R}^n$:
\begin{equation}\label{E1}
\begin{cases}
F(x, \nabla u, \nabla^2 u)=0 \quad & \text{ in $\Omega$},\\
u=g \quad & \text{ on $\partial \Omega$,} 
\end{cases}
\end{equation}
where $F$ is a function defined on $\overline{\Omega}\times \mathbb{R}^n\times \mathcal{S}^n$ and $g$ is a continuous function on $\partial \Omega$. Here $\mathcal{S}^n$ denotes the set of all $n\times n$ symmetric matrices. Suppose there exists a unique (viscosity) solution $u$ of the equation. Then (under certain additional conditions) one may construct a deterministic or stochastic discrete game, with step size $\eps>0$, whose value function $u^\eps$ converges to $u$ (locally) uniformly as $\eps\to 0$.

The most important step in the proof of these game-theoretic approximations is to establish the so-called \emph{dynamic programming principle} \eqref{eq:TDPP} for each particular $F$. More precisely, one proves that $u^\eps$ satisfies a relation between its value at the current position and that of the next possible positions, which is reminiscent of nonlinear semigroup-properties. A formal derivation for (\ref{E1}) from the games often follows by adopting the Taylor expansion on its dynamic programming principle. 

In this work, we are not interested in sending $\eps\to 0$ and applications to the continuum equation, but instead in obtaining solutions to general discrete dynamic programming equations with an alternative PDE approach. 

One example we have in mind is related to the Dirichlet boundary problem for the infinity Laplacian with continuous boundary data $g: {\partial \Omega} \to \R$:
 \begin{numcases}{}
  -\ilap u  :=-\tr\left(\frac{\nabla u\otimes \nabla u}{|\nabla u|^2}\nabla^2 u\right)=  0 \quad &\mbox{in $\Omega$,}\nonumber \\ 
  u = g \quad & \mbox{on $\partial \Omega$}, \nonumber
 \end{numcases}
whose unique viscosity solution $u$ can be approximated locally uniformly by solutions $u^\eps$ to the DPP associated with a ``tug-of-war'' game, first proposed in \cite{PSSW},
\begin{equation}\label{eq:DPPinfty}
 \begin{cases}\displaystyle
  u^\eps(x) = \frac{1}{2} \sup\limits_{B_\eps(x)} u^\eps + \frac{1}{2} \inf\limits_{B_\eps(x)} u^\eps \quad &\mbox{for $x \in \Omega$}\\
  u^\eps = g \quad & \mbox{on $\partial \Omega$}.
 \end{cases}
\end{equation}

Let us define sub- and supersolutions for this particular problem.
\begin{definition}\label{def:subsubinftysols}
A function $u: \overline{\Omega} \to \R$ with $\sup_{\overline{\Omega}} u < \infty$ is called a subsolution of \eqref{eq:DPPinfty}, if it satisfies
\[
 \begin{cases}\displaystyle
  u(x) \leq \frac{1}{2} \sup\limits_{B_\eps(x)} u + \frac{1}{2} \inf\limits_{B_\eps(x)} u \quad &\mbox{for $x \in \Omega$},\\
  u \leq g \quad & \mbox{on $\partial \Omega$}.
 \end{cases}
\]
The set of all subsolutions is denoted by $\underline{\mathcal{S}}$.

Conversely, a function $u: \overline{\Omega} \to \R$ with $\inf_{\overline{\Omega}} u > -\infty$ is called a supersolution of \eqref{eq:DPPinfty}, if
\[
 \begin{cases}\displaystyle
  u(x) \geq \frac{1}{2} \sup\limits_{B_\eps(x)} u + \frac{1}{2} \inf\limits_{B_\eps(x)} u \quad &\mbox{for $x \in \Omega$},\\
  u \geq g \quad & \mbox{on $\partial \Omega$}.
 \end{cases}
\]
The set of all supersolutions is denoted by $\overline{\mathcal{S}}$.

A function $u: \overline{\Omega} \to \R$ is a solution, if it is simultaneously a subsolution and a supersolution. In the literature this is also called an infinity harmonious function, cf. \cite{LA}. The set of all solutions is denoted by $\mathcal{S}$.
\end{definition}

A first easy observation is that $\underline{\mathcal{S}}$ and $\overline{\mathcal{S}}$ are non-empty.
\begin{example}\label{ex:subspersolutions}
The function $u: \overline{\Omega} \to \R$ given by
\[
 u(x) := \begin{cases}
	  \inf_{\partial \Omega} g \quad &x \in \Omega\\
          g \quad &x \in \partial \Omega.
         \end{cases}
\]
is a subsolution, the function $v: \overline{\Omega} \to \R$ given by
\[
 v(x) := \begin{cases}
	  \sup_{\partial\Omega} g \quad &x \in \Omega\\
          g \quad &x \in \partial \Omega.
         \end{cases}
\]
is a supersolution.
\end{example}

The next observation is that any subsolution is uniformly bounded from above, and any supersolution is uniformly bounded from below. This follows from the following maximum principle:
\begin{proposition}[Maximum principle]
Any subsolution $u \in \underline{\mathcal{S}}$ satisfies 
\[
 \sup_{\overline{\Omega}} u \leq \sup_{\partial \Omega} g,
\]
and any supersolution 
$u \in \overline{\mathcal{S}}$ satisfies 
\[
 \inf_{\overline{\Omega}} u \geq \inf_{\partial \Omega} g.
\]
\end{proposition}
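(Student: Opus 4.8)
The plan is to prove the subsolution estimate by contradiction and then get the supersolution estimate by the symmetric argument (replace suprema by infima, super-level sets by sub-level sets, and reverse all inequalities). So assume $u\in\underline{\mathcal S}$, write $M:=\sup_{\overline\Omega}u$ (finite by Definition~\ref{def:subsubinftysols}), and suppose toward a contradiction that $M>\sup_{\partial\Omega}g$; put $r:=M-\sup_{\partial\Omega}g>0$. The object I would track is the family of super-level sets $\Omega_s:=\{x\in\overline\Omega:u(x)>M-s\}$, which are non-empty for every $s>0$ since $M$ is the supremum. (Throughout, $\sup_{B_\eps(x)}u$ and $\inf_{B_\eps(x)}u$ are understood as suprema and infima over $B_\eps(x)\cap\overline\Omega$, the part of the ball where $u$ is defined.)

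First I would observe that $\Omega_s\subset\Omega$ whenever $0<s<r$: a point $x\in\Omega_s\cap\partial\Omega$ would satisfy $M-s<u(x)\le g(x)\le\sup_{\partial\Omega}g=M-r\le M-s$, which is absurd. The second, central, step is a propagation estimate: for $0<s<r/2$ one has $\Omega_s+B_\eps(0)\subset\Omega_{2s}$. Indeed, for $x\in\Omega_s$ the first step gives $x\in\Omega$, so the subsolution inequality holds at $x$; combined with $\sup_{B_\eps(x)}u\le M$ it forces $\inf_{B_\eps(x)}u\ge 2u(x)-M>M-2s$, i.e.\ $B_\eps(x)\cap\overline\Omega\subset\Omega_{2s}$. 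Since $2s<r$, that set lies in $\Omega$, so $B_\eps(x)$ meets neither $\partial\Omega$ nor $\mathbb{R}^n\setminus\overline\Omega$ (using connectedness of $B_\eps(x)$ and $x\in\Omega$), whence $B_\eps(x)\subset\Omega$ and therefore $B_\eps(x)=B_\eps(x)\cap\overline\Omega\subset\Omega_{2s}$.

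To conclude, I would iterate: fix an integer $k$ with $k\eps>\diam\Omega$ and pick any $s\in(0,r\,2^{-k})$. Since $2^js<r$ for $j=0,\dots,k$, the propagation estimate applies $k$ times in succession and yields $\Omega_s+B_{k\eps}(0)\subset\Omega_{2^ks}\subset\Omega$. But $\Omega_s\neq\emptyset$, so some ball $B_{k\eps}(x)$ of radius $k\eps>\diam\Omega$ would be contained in $\Omega$ — impossible, since that ball contains two points at distance greater than $\diam\Omega$. This contradiction gives $\sup_{\overline\Omega}u\le\sup_{\partial\Omega}g$.

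The step I expect to cause the most trouble is the propagation estimate, and in particular the topological bookkeeping inside it: one must exploit that near-maximality at $x$ spreads — via the $\inf$ term in the DPP — to \emph{every} point of the $\eps$-ball, and then ensure that ball stays inside $\Omega$ so the iteration is legitimate. A mild secondary subtlety is that the defect doubles at each step, $s\mapsto 2s$, so $s$ has to be taken exponentially small in the number of iterations; this is harmless because boundedness of $\Omega$ caps the number of iterations at roughly $\diam\Omega/\eps$, a constant independent of $u$. Everything else — non-emptiness of the $\Omega_s$, the arithmetic $\inf_{B_\eps(x)}u\ge 2u(x)-M$, and the final diameter contradiction — is routine.
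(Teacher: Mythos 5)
Your proof is correct. The underlying mechanism is the same as in the paper's argument --- the inequality $u(x)\le\frac12\sup_{B_\eps(x)}u+\frac12\inf_{B_\eps(x)}u$ combined with $\sup_{B_\eps(x)}u\le M$ forces near-maximality of $u$ to spread from $x$ to every point of $B_\eps(x)$ --- but you implement it along a genuinely different route. The paper fixes one near-maximum point $x_0$ and follows an explicit chain $x_0,x_1,\dots,x_K$ of $\eps$-steps terminating on $\partial\Omega$, accumulating an additive error $\delta$ per step; this tacitly requires that every interior point can be joined to $\partial\Omega$ by a chain of at most $\lceil\diam\overline{\Omega}/\eps\rceil+1$ steps staying inside $\overline{\Omega}$, a fact the paper asserts without justification. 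You instead propagate the entire super-level set $\Omega_s$ outward by $\eps$ at each stage, with the defect doubling rather than adding, and obtain the contradiction from the purely metric fact that no ball of radius exceeding $\diam\Omega$ fits inside $\Omega$. Your version buys two things: you never need to construct a path to the boundary, so the argument depends only on boundedness of $\Omega$ and not on its connectedness or the length of interior chains; and the inclusion $B_\eps(x)\subset\Omega$ is established honestly via connectedness of the ball, which is exactly the point where a careless version of either proof could go wrong (the extrema are only taken over $B_\eps(x)\cap\overline{\Omega}$, so one must rule out the ball escaping $\overline{\Omega}$). The price is that $s$ must be taken exponentially small in the number of iterations, which, as you observe, is harmless because that number is capped by roughly $\diam\Omega/\eps$. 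Both arguments are equally elementary and yield the same constant-free conclusion.
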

\begin{proof}
We are only going to show the subsolution case. It suffices to prove that for any $\nu > 0$,
\begin{equation}\label{eq:supremumbounded}
 \sup_{\overline{\Omega}} u \leq \sup_{\partial \Omega} g + \nu.
\end{equation}
So fix $\nu > 0$, and set $K := \lceil \frac{\diam \overline{\Omega}}{\eps} \rceil + 1$. For $\delta := (K+1)^{-1} \eps$, let $x_0 \in \overline{\Omega}$ 
\[
 \sup_{\overline{\Omega}} u \leq u(x_0) + \delta.
\]
If $x_0 \in \partial \Omega$, then \eqref{eq:supremumbounded} is proven. If $x_0 \in \Omega$, we pick a sequence $(x_i)_{i=0}^K$ such that $x_i \in B_\eps(x_{i-1})$ and $x_K \in \partial \Omega$. Since $u$ is a subsolution,
\begin{align*}
 u(x_0) &\leq \frac{1}{2} \sup_{B_\eps(x_0)} u + \frac{1}{2} \inf_{B_\eps(x_0)} u \leq \frac{1}{2} u(x_0) + \frac{1}{2} u(x_1) + \frac{1}{2}\delta.
\end{align*}
Consequently,
\[
 \sup_{\overline{\Omega}} u \leq u(x_0)  + \delta \leq u(x_1) + 2\delta.
\]
By induction,
\[
 \sup_{\overline{\Omega}} u \leq u(x_i) + (i+1) \delta,
\]
and in particular, since $x_K \in \partial \Omega$, by the choice of $\delta$,
\[
 \sup_{\overline{\Omega}} u \leq u(x_K) + (K+1) \delta \leq \sup_{\partial \Omega} g + \nu.
\]
\end{proof}

Having this boundedness, we can define the largest subsolution
\[
 \overline{u}(x) := \sup_{u \in \underline{\mathcal{S}}} u(x), \quad x \in \overline{\Omega},
\]
and the smallest supersolution
\[
 \underline{u}(x) := \inf_{u \in \overline{\mathcal{S}}} u(x), \quad x \in \overline{\Omega}.
\] 

Then we have the following result, which extends the classical Perron's method used in viscosity solution theory \cite{I1, CIL}.
\begin{theorem}[Perron's Method]\label{th:perron}
Both $\overline{u}$ and $\underline{u}$ are solutions to \eqref{eq:DPPinfty}.
\end{theorem}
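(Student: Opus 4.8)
The plan is to follow the classical Perron method, adapted to the discrete operator. Write $Tu(x) := \tfrac12 \sup_{B_\eps(x)} u + \tfrac12 \inf_{B_\eps(x)} u$, so that $v \in \underline{\mathcal S}$ means $v \le Tv$ in $\Om$ and $v \le g$ on $\partial\Om$, while $v \in \overline{\mathcal S}$ reverses both inequalities. The one structural property I would lean on throughout is monotonicity: $u \le v$ on $\overline\Om$ implies $\sup_{B_\eps(x)}u \le \sup_{B_\eps(x)}v$ and $\inf_{B_\eps(x)}u \le \inf_{B_\eps(x)}v$, hence $Tu \le Tv$. I would first record that $\overline u$ is a well-defined bounded function: by the Maximum Principle every subsolution lies below $\sup_{\partial\Om} g < \infty$, and by Example~\ref{ex:subspersolutions} every subsolution lies above the explicit subsolution there, which is $\ge \inf_{\partial\Om} g > -\infty$; in particular $T\overline u$ is finite everywhere. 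I would prove the statement for $\overline u$, the argument for $\underline u$ being symmetric (supremum of subsolutions replaced by infimum of supersolutions, inequalities reversed).

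First I would show $\overline u \in \underline{\mathcal S}$. For any $v \in \underline{\mathcal S}$, $v \le \overline u$ yields $v \le Tv \le T\overline u$ in $\Om$ by monotonicity; taking the supremum over $v$ gives $\overline u \le T\overline u$ in $\Om$. On $\partial\Om$ each $v \le g$, so $\overline u \le g$, and $\sup_{\overline\Om}\overline u \le \sup_{\partial\Om} g < \infty$. Thus $\overline u$ is the largest subsolution.

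Next I would show $\overline u \in \overline{\mathcal S}$. On $\partial\Om$, the subsolution of Example~\ref{ex:subspersolutions} equals $g$, so $\overline u \ge g$ there; combined with the previous step, $\overline u = g$ on $\partial\Om$, and $\inf_{\overline\Om}\overline u \ge \inf_{\partial\Om} g > -\infty$. The interior inequality $\overline u \ge T\overline u$ I would prove by contradiction: if $\overline u(x_0) < T\overline u(x_0)$ for some $x_0 \in \Om$, set $2\theta := T\overline u(x_0) - \overline u(x_0) > 0$ and define $w := \overline u$ on $\overline\Om \setminus \{x_0\}$ and $w(x_0) := \overline u(x_0) + \theta$. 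Then $w \ge \overline u$ pointwise, $w$ is bounded, and $w = \overline u \le g$ on $\partial\Om$ since $x_0 \in \Om$. For $x \in \Om \setminus \{x_0\}$, $w(x) = \overline u(x) \le T\overline u(x) \le Tw(x)$ by the first step and monotonicity; and at $x_0$, monotonicity gives $Tw(x_0) \ge T\overline u(x_0) = \overline u(x_0) + 2\theta \ge \overline u(x_0) + \theta = w(x_0)$. Hence $w \in \underline{\mathcal S}$ with $w(x_0) > \overline u(x_0)$, contradicting the maximality of $\overline u$. Therefore $\overline u \in \overline{\mathcal S}$, and together with the first step, $\overline u \in \mathcal S$.

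I expect the supersolution step to be the main obstacle, specifically verifying that the bumped-up $w$ is still a subsolution. What makes the one-point bump work is that raising $\overline u$ at the single point $x_0$ can only raise $\sup_{B_\eps(x)} w$ and $\inf_{B_\eps(x)} w$ for every $x$, so the subsolution inequality is preserved at every $x \ne x_0$ by monotonicity alone, while at $x_0$ the choice $\theta = \tfrac12\big(T\overline u(x_0) - \overline u(x_0)\big)$ leaves precisely the needed margin; bumping on a whole ball would instead endanger the $\inf$ term at nearby points and force a more delicate construction. A secondary point requiring care is that $\overline u$ is bounded below, so that $T\overline u$ is well-defined — this is exactly where the explicit subsolution of Example~\ref{ex:subspersolutions} and the boundedness of $g$ on the compact set $\partial\Om$ enter. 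Finally, note that, unlike dynamic programming operators built from averaging integrals, this $T$ raises no measurability issue when passing to the supremum over $\underline{\mathcal S}$.
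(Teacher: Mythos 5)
Your proof is correct, and its first half (showing $\overline u\in\underline{\mathcal S}$ by monotonicity of $T$ and passing to the pointwise supremum) is exactly the paper's argument. The second half takes a genuinely different route. The paper observes that the function $v$ defined by $v:=T\overline u$ in $\Om$ and $v:=g$ on $\partial\Om$ is itself a subsolution (since $\overline u\le v$ and monotonicity give $v\le Tv$), so the maximality of $\overline u$ immediately forces $T\overline u=v\le\overline u$ in $\Om$ and $g\le\overline u$ on $\partial\Om$; this is a two-line global argument. You instead run the classical Perron bump: assuming $\overline u(x_0)<T\overline u(x_0)$, you raise $\overline u$ at the single point $x_0$ by half the deficit and check that the modified function $w$ is still a subsolution, contradicting maximality. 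Your verification is complete, including the point that $\overline u$ is bounded below (so $T\overline u$ is finite) and the correct observation that a one-point bump cannot decrease either $\sup_{B_\eps(x)}$ or $\inf_{B_\eps(x)}$, whereas a bump on a ball would endanger the $\inf$ term. The trade-off between the two: the paper's version transfers verbatim to the abstract setting of Theorem~\ref{main:thm2}, since it never leaves the class $\mathcal U$ ($T\overline u\in\mathcal U$ by hypothesis and only monotonicity is used); your bump produces a function obtained by modifying $\overline u$ at one point, which is harmless here because $\mathcal U$ consists of \emph{all} functions $\overline\Om\to\R$, but would require extra care in a function space not closed under pointwise modification (e.g.\ measurable or semicontinuous classes, as in the $p$-Laplacian examples). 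Conversely, your argument localizes the failure of the supersolution inequality, which is closer in spirit to the viscosity-solution Perron method and makes transparent exactly where maximality is used.
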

\begin{proof}
We only give a proof for $\overline{u}$. It is easy to check that $\overline{u} \in \underline{\mathcal{S}}$. Indeed, for any subsolution $u$, we have $u\leq g$ on $\partial \Omega$ and 
\[
u(x)\leq {1\over 2}\sup_{B_\eps(x)} u+{1\over 2}\inf_{B_\eps(x)} u \text{ for any $x\in \Omega$.}
\]
A pointwise supremum yields $\overline{u}\leq g$ on $\partial \Omega$ and
\[
\overline{u}(x)\leq {1\over 2}\sup_{B_\eps(x)}\overline{u}+{1\over 2}\inf_{B_\eps(x)}\overline{u} \text{ for any $x\in \Omega$.}
\]

It suffices to prove that $\overline{u} \in \overline{\mathcal{S}}$. Since $\overline{u}$ is a subsolution, so is $v: \overline{\Omega} \to \R$ given by
\[
 v(x) := \begin{cases}
  \frac{1}{2} \sup\limits_{B_\eps(x)} \overline{u} + \frac{1}{2} \inf\limits_{B_\eps(x)} \overline{u} \quad &\mbox{for $x \in \Omega$}\\
  g \quad & \mbox{on $\partial \Omega$}.
 \end{cases}
\]
Since $\overline{u}$ is the pointwise supremum of all subsolutions, in particular
\[
 \frac{1}{2} \sup\limits_{B_\eps(x)} \overline{u} + \frac{1}{2} \inf\limits_{B_\eps(x)} \overline{u} = v(x) \leq \overline{u}(x),
\]
for $x\in \Omega$ and $g\leq \overline{u}$ on $\partial\Omega$, we deduce that $\overline{u}$ is a supersolution. Consequently, $\overline{u}$ is a solution.

\end{proof}
To our best knowledge, Perron's method (with envelope techniques) was first introduced for the existence of solutions to discrete equations by Armstrong and Smart \cite{AS2}, where they treated a slight variant of dynamic programming equation for the infinity Laplacian so that the solution enjoys better regularity. 

Our idea is similar to theirs but we study the DPP associated with the original tug-of-war posed by \cite{PSSW}, whose solutions are not expected to be even semicontinuous; see \cite[Example 2.4]{AS2} and also \cite[Example 1.2]{LS1}. The loss of regularity here is due to the jump of values on the boundary caused by the finite step size $\eps$. In this case, classical Perron's argument, often used in partial differential equation theory \cite{I1, CIL} or for modified dynamic programming equations \cite{AS2}, should be slightly adapted, as easily seen from the example above. One needs to, for example, only take the supremum of all subsolutions without further applying the semicontinuous envelopes. 

Our arguments leading to Theorem~\ref{th:perron} are very general. Indeed, they allow us to give an elementary approach for finding solutions to DPPs for a very general class of equations in a general metric space $X$ (equipped with a measure when necessary); more precisely, for $\mathcal{U} \subset \{u: X \to \R\}$ and a given operator $T: \mathcal{U} \to \mathcal{U}$ we find solutions $u$ to
\begin{equation}\label{eq:DPPwithT}
  u(x) = Tu(x) \quad \mbox{for $x \in X$}.
\end{equation}
In our case above, $T$ is given by
\begin{equation}\label{eq:DPPinftyOperator}
 \begin{cases}
  Tu(x)  = \frac{1}{2} \sup\limits_{B_\eps(x)} u + \frac{1}{2} \inf\limits_{B_\eps(x)} u \quad &\mbox{for $x \in \Omega$,}\\
  Tu(x) = g(x) \quad & \mbox{for $x \in \partial \Omega$}.
 \end{cases}
\end{equation}

The main properties for $T$ and $\mathcal{U}$ are as follows:
\begin{enumerate}
\item[(H1)] The operator $T$ is monotone.\\
\item[(H2)] There exists at least one subsolution (or supersolution).\\
\item[(H3)] There is a uniform upper bound for all subsolutions (or a uniform lower bound for all supersolutions).\\
%
\item[(H4)] The function space $\mathcal{U}$ is closed under the pointwise supremum operations (or infimum operations).\\
%
\end{enumerate}
We specify the precise meaning of these conditions in Section~\ref{generalcase}, and discuss examples of known DPPs and there respective operator $T$ in Section~\ref{section:example}.

Our main result, Theorem~\ref{main:thm2}, is that if $T$ satisfies (H1)--(H4), then there exists a solution to \eqref{eq:DPPwithT}.

Usually, the game-theoretic approach and Perron's method are seen as separate ways to obtain existence of solutions to fully nonlinear PDEs. Our result indicates that they are connected on the discrete level.

It is not unusual to assume the monotonicity (H1). Similar conditions are used in schemes for fully nonlinear second order PDE, see, e.g. \cite{BS}, which are elliptic, i.e.,
\begin{equation}\tag{E}
\begin{aligned}
F(x, p, A)\leq F(x, p, B) &\quad \mbox{for all $x\in \overline{\Omega}=\Omega\cup \partial \Omega$, $p\in \mathbb{R}^n$ and }\\
  &\mbox{$n\times n$ symmetric matrices $A, B$ with $A\geq B$}.
\end{aligned}
\end{equation}
%
The upper bound for subsolutions (H3) is necessary as to justify the pointwise supremum of subsolutions. It is, however, non-trivial to obtain the boundedness. In this work, we construct a bounded strict supersolutions and prove a comparison principle for any subsolution and strict supersolution, Theorem~\ref{comparison:thm}. We show the construction of strict supersolutions for each example in Section \ref{section:example}.

Concerning (H4), we must choose a suitable function space $\mathcal{U}$ to guarantee the closedness. The classical choice for elliptic and parabolic PDEs is the space of all upper/lower semicontinuous solutions and an application of semicontinuous envelope is also necessary; see for example \cite{I1, CIL}. As is already mentioned above, in our case, we generally cannot expect even semicontinuity. In the case of integral DPPs, $\mathcal{U}$ has to be a subset of all measurable functions, which makes it more difficult to ensure (H4).

The remaining issue is the uniqueness of solutions to DPP, which is equivalent to the usual comparison principle (as shown in Lemma~\ref{la:uniquenesVScomparison}), since we have Perron's method in hand for existence of solutions. Once the uniqueness is established,  we may conclude that the solutions we found via Perron's method coincide with the value function obtained in games. In \cite{LS1} we obtained uniqueness for a particular problem related to the biased tug-of-war game with nonzero running payoff. In Section \ref{sec uniqueness}, we generalize this to dynamic programming equations in the presence of positive or negative running payoffs. 

The uniqueness problem in general, especially without running payoffs, is still not known. If the semicontinuity of subsolutions is known, then a comparison principle of sub- and supersolutions can be proved; see \cite{AS, AS2}. We however do not have even semicontinuity. Another earlier related result for existence and uniqueness is due to \cite{LA} but for a quite different DPP; the radius $\eps$ of the ball $B_\eps(x)$ where the extrema are taken depends on $x$ and diminishes as $x$ approaches the boundary of the domain. In our general setting of \eqref{eq:DPPinfty}, one might hope that a finer analysis of game-trees as in \cite{LS1} leads to answers related to this question.

\subsection*{Acknowledgments}
The authors thank Juan J. Manfredi and Marta Lewicka for their interests and valuable suggestions. The authors also thank Scott Armstrong for his helpful remarks on the first draft of this paper.

\section{General Existence}\label{generalcase}
We consider a function space $\mathcal{U} \subset  \{u: X  \to \R\}$, and an operator $T: \mathcal{U} \to \mathcal{U}$. Then, we look for solutions $u \in \mathcal{U}$ to
\begin{equation}\label{eq:TDPP}
 Tu = u \quad \mbox{in $X$}.
\end{equation}

Let us start defining sub- and supersolutions to \eqref{eq:TDPP}. The following definition is consistent with the usual classical definition of sub- and supersolutions of PDEs with Dirichlet boundary conditions.
\begin{definition}
Given $T:\mathcal{U}\to \mathcal{U}$, a function $u\in \mathcal{U}$ is called a subsolution of \eqref{eq:TDPP} if $\sup_X u < \infty$ and
\[
u\leq Tu \quad \mbox{in $X$}.
\]
Similarly, a function $u\in \mathcal{U}$ is called a supersolution of \eqref{eq:TDPP} if $\inf_X u > -\infty$ and
\[
u\geq Tu \quad \text{in $X$}.
\]
A function $u\in \mathcal{U}$ is called a solution if it is both a subsolution and a supersolution, i.e., if $Tu = u$ pointwise in $X$.
\end{definition}

In the case of tug-of-war \eqref{eq:DPPinfty}, this definition is consistent with Definition~\ref{def:subsubinftysols}, with $T$ defined as in \eqref{eq:DPPinftyOperator}.

Similar to classical Perron's method for the existence of solutions, our strategy is to take either the pointwise supremum of all subsolutions or the pointwise infimum of all supersolutions. To accomplish the former, we assume the following conditions on $T$ and $\mathcal{U}$.

\begin{enumerate}
 \item[(A1)] (Monotonicity) If $u \leq v$ in $X$ then $Tu \leq Tv$ in $X$.
 \item[(A2)] (Non-emptyness for subsolutions) There exists at least one subsolution of \eqref{eq:TDPP}.
 \item[(A3)] (Boundedness of subsolutions) For a uniform constant $C = C(T) > 0$ any subsolution $u$ of \eqref{eq:TDPP} satisfies that $\sup_X u \leq C$.

\item[(A4)] (Closedness under supremum) If $\tilde{\mathcal{U}} \subset \mathcal{U}$, and 
\[
\sup_X \sup_{u \in \tilde{\mathcal{U}}} u < \infty,
\] 
then 
\[
\tilde{u}_1(x) := \sup_{u \in \tilde{\mathcal{U}}} u(x) \in \mathcal{U}.
\]
\end{enumerate}

If one chooses to consider the infimum of all supersolutions, then we should replace (A2)--(A4) above with (B2)--(B4) below.
\begin{enumerate}
\item[(B2)] (Non-emptiness for supersolutions) There exists at least one supersolution of \eqref{eq:TDPP}.
\item[(B3)] (Boundedness of supersolutions) For a uniform constant $C = C(T) > 0$ any supersolution $u$ of \eqref{eq:TDPP} satisfies that $\inf_X u \leq -C$.
 \item[(B4)] (Closedness under infimum) If $\tilde{\mathcal{U}} \subset \mathcal{U}$, and 
\[
\inf_X \inf_{u \in \tilde{\mathcal{U}}} u > -\infty,
\] 
then 
\[
\tilde{u}_2(x) := \inf_{u \in \tilde{\mathcal{U}}} u(x) \in \mathcal{U}.
\]
\end{enumerate}

\begin{theorem}[Existence, I]\label{main:thm2}
Let $X$ be a set of points and $\mathcal{U} \subset \{u: X \to \R\}$, and let $T: \mathcal{U} \to \mathcal{U}$. Assume $T$ and $\mathcal{U}$ satisfy (A1), (A2), (A3) and (A4).  Let $\underline{\mathcal{U}}$ be the set of subsolutions and 
\[
 \overline{u}(x) := \sup_{u \in \underline{\mathcal{U}}} u(x). 
\]
Then $\overline{u} \in \mathcal{U}$ is a solution to \eqref{eq:TDPP}.
\end{theorem}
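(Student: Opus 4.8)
The plan is to mimic the proof of Theorem~\ref{th:perron}, but phrased abstractly. First I would check that $\overline{u}$ is well-defined and lies in $\mathcal{U}$. By (A2) the set $\underline{\mathcal{U}}$ of subsolutions is non-empty, and by (A3) every subsolution satisfies $\sup_X u \le C$, so $\sup_X \overline{u} = \sup_X \sup_{u\in\underline{\mathcal{U}}} u \le C < \infty$. Hence (A4), applied with $\tilde{\mathcal{U}} = \underline{\mathcal{U}}$, gives $\overline{u} \in \mathcal{U}$, and simultaneously records that $\sup_X \overline{u} < \infty$, which is part of being a subsolution.

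Next I would show $\overline{u}$ is a subsolution, i.e. $\overline{u} \le T\overline{u}$. For any fixed $u \in \underline{\mathcal{U}}$ we have $u \le \overline{u}$ pointwise, so by monotonicity (A1), $Tu \le T\overline{u}$; combining with $u \le Tu$ gives $u \le T\overline{u}$ for every subsolution $u$. Taking the pointwise supremum over $u \in \underline{\mathcal{U}}$ yields $\overline{u} \le T\overline{u}$, so $\overline{u} \in \underline{\mathcal{U}}$ — indeed it is the largest subsolution.

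Then I would show $\overline{u}$ is a supersolution, i.e. $T\overline{u} \le \overline{u}$. Consider $v := T\overline{u}$. Since $T: \mathcal{U} \to \mathcal{U}$ and $\overline{u} \in \mathcal{U}$, we have $v \in \mathcal{U}$. From the previous step $\overline{u} \le v$, so by (A1) $T\overline{u} \le Tv$, that is $v \le Tv$; also $\sup_X v = \sup_X T\overline{u} \le \sup_X Tv$, and since $v = T\overline{u}$ is itself bounded above (either directly, or because once we know $v$ is a subsolution, (A3) applies), $v$ is a subsolution. Hence $v \in \underline{\mathcal{U}}$, so by definition of $\overline{u}$ as the pointwise supremum of all subsolutions, $v \le \overline{u}$, i.e. $T\overline{u} \le \overline{u}$. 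Together with $\overline{u} \le T\overline{u}$ this gives $T\overline{u} = \overline{u}$, so $\overline{u}$ is a solution.

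The only delicate point — the step I expect to be the main obstacle — is verifying that $v = T\overline{u}$ is genuinely a subsolution rather than merely satisfying $v \le Tv$; one must ensure $\sup_X v < \infty$. This is where (A3) is essential: once $v \le Tv$ and $v \in \mathcal{U}$ are established, (A3) forces $\sup_X v \le C$, so $v \in \underline{\mathcal{U}}$ and the argument closes. (Equivalently, one notes directly that since $\overline{u}$ is bounded above and $T$ preserves boundedness in the relevant examples, $v$ is bounded above; but invoking (A3) keeps the argument purely at the level of the abstract hypotheses.) All other steps are immediate consequences of monotonicity and the definition of pointwise supremum.
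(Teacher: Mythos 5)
Your proof is correct and follows essentially the same route as the paper's: establish via (A2)--(A4) that $\overline{u}\in\mathcal{U}$ is well defined, use monotonicity (A1) and the pointwise supremum to get $\overline{u}\le T\overline{u}$, and then conclude $T\overline{u}\le\overline{u}$ by observing that $T\overline{u}$ is itself a subsolution and $\overline{u}$ is the largest one. The one caveat concerns the "delicate point" you flag: invoking (A3) to certify $\sup_X T\overline{u}<\infty$ is circular, because (A3) applies only to functions already known to be subsolutions, and the definition of subsolution already requires $\sup_X v<\infty$; note, however, that the paper's own proof silently makes the same leap (it asserts $Tu\in\underline{\mathcal{U}}$ for every subsolution $u$ without checking that $Tu$ is bounded above), this being harmless in all the examples since there $T$ manifestly maps functions bounded above to functions bounded above.
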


\begin{theorem}[Existence, II]\label{main:thm3}
Let $X$ be a set of points and $\mathcal{U} \subset \{u: X \to \R\}$, and let $T: \mathcal{U} \to \mathcal{U}$. Assume $T$ and $\mathcal{U}$ satisfy (A1), (B2), (B3) and (B4). Let $\overline{\mathcal{U}}$ be the set of supersolutions and 
\[
 \underline{u}(x) := \inf_{u \in \overline{\mathcal{U}}} u(x). 
\]
Then $\underline{u} \in \mathcal{U}$ is a solution to \eqref{eq:TDPP}.
\end{theorem}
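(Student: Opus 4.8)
The plan is to show that $\underline{u}$ satisfies the two pointwise inequalities $T\underline{u} \le \underline{u}$ and $\underline{u} \le T\underline{u}$ on $X$, which together give $T\underline{u} = \underline{u}$, i.e. $\underline{u}$ solves \eqref{eq:TDPP}. Before establishing these, I would first record that $\underline{u}$ is a legitimate element of $\mathcal U$ and a bona fide supersolution: by (B2) the family $\overline{\mathcal U}$ is non-empty, and by the uniform bound (B3) (a uniform lower bound on supersolutions) every $w \in \overline{\mathcal U}$ satisfies $\inf_X w \ge -C$, so that $\inf_X \underline{u} \ge -C > -\infty$; closedness under infimum (B4) then yields $\underline{u} \in \mathcal U$. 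This mirrors the role played by (A2)--(A4) for $\overline{u}$ in Theorem~\ref{main:thm2}, with suprema replaced by infima throughout.

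For the first inequality I would argue directly from monotonicity. Fix an arbitrary supersolution $w \in \overline{\mathcal U}$. Since $\underline{u} \le w$ on $X$, (A1) gives $T\underline{u} \le Tw$, and because $w$ is a supersolution, $Tw \le w$; hence $T\underline{u} \le w$ for every $w \in \overline{\mathcal U}$. Taking the pointwise infimum over $w \in \overline{\mathcal U}$ yields $T\underline{u} \le \underline{u}$. Combined with the lower bound recorded above and $\underline{u} \in \mathcal U$, this already shows that $\underline{u}$ is itself a supersolution.

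The second inequality is the heart of the matter, and I would obtain it by using $T\underline{u}$ as a competitor in the definition of $\underline{u}$. Applying (A1) to the inequality $T\underline{u} \le \underline{u}$ just established gives $T(T\underline{u}) \le T\underline{u}$, so $T\underline{u}$ satisfies the functional inequality of a supersolution; moreover $T\underline{u} \in \mathcal U$ because $T$ maps $\mathcal U$ into itself. If $T\underline{u}$ is a genuine supersolution, then it is one of the functions over which the infimum defining $\underline{u}$ is taken, whence $\underline{u} \le T\underline{u}$, and together with $T\underline{u} \le \underline{u}$ we conclude $\underline{u} = T\underline{u}$.

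The \emph{main obstacle} is precisely the verification that the competitor $T\underline{u}$ qualifies as a supersolution, i.e. that $\inf_X T\underline{u} > -\infty$. The membership $T\underline{u} \in \mathcal U$ and the functional inequality come for free, but $T$ a priori only returns real-valued functions, and the relation $T\underline{u} \le \underline{u}$ controls $T\underline{u}$ only from above, not from below. The finiteness of the infimum must therefore be extracted from the structure encoded in (B3): this is the dual of the boundedness-from-above step needed for $T\overline{u}$ in Theorem~\ref{main:thm2}, where in the concrete tug-of-war setting $T\overline{u} \le \sup_X \overline{u}$ followed from monotonicity together with $T$ fixing constant states. Accordingly, I would establish as the key estimate a lower bound of the form $T\underline{u} \ge \inf_X \underline{u}$ (or more generally $T\underline{u} \ge -C$), obtained by comparison against constant states via (A1) -- exactly the maximum-principle-type bound that (B3) abstracts. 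Once this is in hand, $T\underline{u}$ is an admissible supersolution and the argument closes.
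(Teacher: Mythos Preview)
Your argument is exactly the dual of the paper's proof of Theorem~\ref{main:thm2} (which is all the paper offers for Theorem~\ref{main:thm3}, declaring it ``analogous''): first obtain $T\underline{u}\le \underline{u}$ from monotonicity, so $\underline{u}\in\overline{\mathcal U}$; then argue that $T\underline{u}\in\overline{\mathcal U}$ and conclude $\underline{u}\le T\underline{u}$ by minimality. So the approach is the same.

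Where you go further than the paper is in isolating the boundedness of $T\underline{u}$ as an obstacle. The paper does \emph{not} address this point: in the subsolution version it simply writes ``by the monotonicity (A1), we get $Tu\in\underline{\mathcal U}$'' and never checks that $\sup_X Tu<\infty$. So you are not missing any device the paper uses; the paper just asserts the membership. Your proposed remedy, however --- comparing $T\underline{u}$ against the constant $\inf_X\underline{u}$ via (A1) --- is not available at this level of generality: monotonicity gives $T\underline{u}\ge T(\inf_X\underline{u})$, but nothing in (A1), (B2)--(B4) tells you that $T$ applied to a constant is bounded below by that constant (this holds in all the concrete examples of Section~\ref{section:example}, where $T$ visibly maps bounded functions to bounded functions, but it is not part of the abstract hypotheses). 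In short, your proof matches the paper's, and the issue you raise is a genuine lacuna in the abstract statement that the paper leaves implicit rather than resolves.
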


We only prove Theorem \ref{main:thm2}, since the argument for Theorem \ref{main:thm3} is analogous.
\begin{proof}[Proof of Theorem \ref{main:thm2}]
For any subsolution $u \in \underline{\mathcal{U}}$, by the monotonicity (A1), we get $Tu \in \underline{\mathcal{U}}$.

By (A3), all elements $u$ of $\underline{\mathcal{U}}$ are bounded by a uniform constant, thus $\overline{u} < \infty$ is well defined. By the monotonicity (A1), we have that
\[
 u \leq Tu \leq T\overline{u} \quad \mbox{in $X$, for any $u \in \underline{\mathcal{U}}$}.
\]
Taking pointwise the supremum, this implies
\[
 \overline{u} \leq T\overline{u} \quad \mbox{in $X$},
\]
and thus $\overline{u} \in \underline{\mathcal{U}}$. As observed above, this implies in particular that $T \overline{u} \in \underline{\mathcal{U}}$, and since $\overline{u}$ is the pointwise supremum, we have
\[
 T\overline{u} \leq \overline{u} \quad\text{in $X$}.
\]
Thus, $\overline{u}$ is a solution.
%
%
%
%
\end{proof}

\begin{remark}
It is straight-forward to extend the proof of Theorem~\ref{main:thm2} to a more general function space $\mathcal{U} \subset \{u: X \to Y\}$, where $Y$ is a conditionally complete lattice $Y$.
\end{remark}

%
Theorem \ref{main:thm2} can be viewed as a general version of Perron's method. The regularity assumptions on $u$ are minimal. It is also clear that if all subsolutions are bounded from above and all supersolutions are bounded from below, then $\overline{u}$ is the maximal solution of \eqref{eq:TDPP} and $\underline{u}$ is the minimal solution of \eqref{eq:TDPP}.

It thus suffices to show that $\overline{u}\leq \underline{u}$ to conclude the uniqueness of the solutions but in this generalality this remains an open problem. It is unlikely that the usual comparison principle holds in such a general setting, but it would be interesting to find conditions on $T$ when such a principle holds. 

The comparison principle is however equivalent to uniqueness:
\begin{proposition}\label{la:uniquenesVScomparison}
Assume that $T: \mathcal{U} \to \mathcal{U}$ satisfies (A1)--(A4) and (B2)--(B4). Then the following are equivalent
\begin{itemize}
 \item[(i)] For any $u_1, u_2 \in \tilde{U}$ with $Tu_i = u_i$ in $X$, $i = 1,2$, then $u_1 = u_2$ in $X$.
 \item[(ii)] For any $u_1, u_2 : X \to \R$ such that $u_1 \leq Tu_1$ and $u_2 \geq Tu_2$ we have $u_1 \leq u_2$ in $X$.
\end{itemize}
\end{proposition}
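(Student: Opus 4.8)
The plan is to prove the two implications separately, using the two existence results (Perron's method, Theorems~\ref{main:thm2} and~\ref{main:thm3}) as the bridge between uniqueness and comparison.

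\emph{The implication (ii)$\Rightarrow$(i) is immediate.} If $u_1,u_2\in\mathcal{U}$ both satisfy $Tu_i=u_i$ in $X$, then each of them is simultaneously a subsolution and a supersolution. Applying the comparison property (ii) once with $u_1$ playing the role of the subsolution and $u_2$ that of the supersolution yields $u_1\leq u_2$; applying it again with the roles of $u_1$ and $u_2$ exchanged yields $u_2\leq u_1$. Hence $u_1=u_2$ in $X$.

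\emph{For (i)$\Rightarrow$(ii) I would invoke Perron's method in both forms.} Since (A1)--(A4) hold, Theorem~\ref{main:thm2} gives that $\overline{u}:=\sup_{u\in\underline{\mathcal{U}}}u$ is a solution of \eqref{eq:TDPP}; since (A1) and (B2)--(B4) hold, Theorem~\ref{main:thm3} gives that $\underline{u}:=\inf_{u\in\overline{\mathcal{U}}}u$ is a solution. By (A3) every subsolution is bounded above and by (B3) every supersolution is bounded below, so these pointwise envelopes are well-defined real-valued functions and, as already noted after Theorem~\ref{main:thm2}, $\overline{u}$ is the maximal solution and $\underline{u}$ the minimal solution. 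In particular, for every subsolution $u_1$ one has $u_1\leq\overline{u}$ directly from the definition of the pointwise supremum, and for every supersolution $u_2$ one has $u_2\geq\underline{u}$. Now the uniqueness hypothesis (i), applied to the two solutions $\overline{u}$ and $\underline{u}$, forces $\overline{u}=\underline{u}$ in $X$. Combining these facts, for an arbitrary subsolution $u_1$ and an arbitrary supersolution $u_2$,
\[
u_1\ \leq\ \overline{u}\ =\ \underline{u}\ \leq\ u_2 \quad\text{in }X,
\]
which is precisely the comparison statement (ii).

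\emph{On the difficulty:} there is no deep obstacle, since the substantive work is already packaged in the existence theorems. The points that need attention are (a) checking that (A3)/(B3) are indeed what guarantees the envelopes $\overline{u}$ and $\underline{u}$ are finite and extremal, and (b) reading statement (ii) correctly: because $T$ is defined only on $\mathcal{U}$, the inequalities $u_1\leq Tu_1$ and $u_2\geq Tu_2$ tacitly require $u_1,u_2\in\mathcal{U}$, and together with the boundedness this says exactly $u_1\in\underline{\mathcal{U}}$ and $u_2\in\overline{\mathcal{U}}$, so the displayed chain applies to them.
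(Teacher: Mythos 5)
Your proof is correct and follows essentially the same route as the paper's: (ii)$\Rightarrow$(i) by applying comparison twice to two solutions, and (i)$\Rightarrow$(ii) by sandwiching an arbitrary subsolution and supersolution between the Perron envelopes $\overline{u}$ and $\underline{u}$, which (i) forces to coincide. Your added remarks on why the envelopes are finite and extremal, and on the tacit requirement $u_1,u_2\in\mathcal{U}$ in (ii), are accurate clarifications of points the paper leaves implicit.
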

\begin{proof}
(i)$\Rightarrow$ (ii):
Let $\overline{u}, \underline{u} : X \to \R$ be the solutions obtained respectively from Theorem~\ref{main:thm2} and Theorem \ref{main:thm3}. Note that any subsolutions $u$ satisfies $u \leq \overline{u}$ and any supersolution $v$ satisfies $v \geq \underline{u}$. 
The assumption (i) implies that
\[
 u_1 \leq \overline{u} = \underline{u} \leq u_2.
\]

(ii)$\Rightarrow$ (i): Since both solutions $u_1, u_2$ are both, sub- and supersolutions, we have immediately $u_1 \leq u_2 \leq u_1$, and thus $u_1 = u_2$.
\end{proof}

Another important issue is to ensure the uniform boundedness (A3) of subsolutions (or supersolution, respectively). In the Section~\ref{s:boundedness} we discuss this in more detail.

\section{The Boundedness}\label{s:boundedness}
We discuss for subsolutions $u$ of \eqref{eq:TDPP} the boundedness from above. In \cite{LS1} we obtained boundedness for a quite general type of DPP using an iteration method. Here, we give a different type of proof for our situation: The idea is to construct a strict supersolution of \eqref{eq:TDPP} and to use a weaker type of comparison principle.

Let us first give a definition for strict sub- and supersolutions.

\begin{definition}
For any given operator $T:\mathcal{U}\to \mathcal{U}$, a function $u\in \mathcal{U}$ with $\sup_X u < \infty$ is called a \emph{strict subsolution} of \eqref{eq:TDPP} if there exists a constant $\sigma>0$ such that
\[
u\leq Tu-\sigma \text{ in $X$.}
\]
Similarly, a function $u\in \mathcal{U}$ with $\inf_X u > -\infty$ is called a \emph{strict supersolution} of \eqref{eq:TDPP} if there exists a $\sigma>0$ such that
\[
u\geq Tu+\sigma \text{ in $X$.}
\] 
\end{definition}
It is clear that a strict subsolution (resp., strict supersolution) is a subsolution (resp., supersolution).

Again, the above definition in practice contains the boundary condition $u=g$ on $X\setminus\Omega$.
\begin{theorem}[Strict comparison theorem]\label{comparison:thm}
Let $\mathcal{U}$ be a linear space. Let $T:\mathcal{U}\to \mathcal{U}$ be any given operator that satisfies monotonicity (A1) of Theorem \ref{main:thm2}.  Assume that for any $x\in X$, any $c\in \mathbb{R}$ and any $u\in \mathcal{U}$, the following two relations holds:
\begin{equation}\label{tran:free}
T(u+c)(x)\leq (Tu)(x)+c,
\end{equation}
Then any subsolution $u\in \mathcal{U}$ of \eqref{eq:TDPP} and any \emph{strict} supersolution $v\in \mathcal{U}$ of \eqref{eq:TDPP} 
satisfies
\[
 u \leq v \quad \mbox{in $X$}.
\]
The same relation holds if $u$ is a strict subsolution and $v$ is a supersolution.
\end{theorem}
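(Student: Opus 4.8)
The plan is to argue by contradiction using a supremum argument, exploiting the strict gap $\sigma>0$ to beat the possible failure of the reverse inequality in \eqref{tran:free}. Suppose $u$ is a subsolution and $v$ is a strict supersolution with $v \geq Tv + \sigma$ for some $\sigma > 0$, but $M := \sup_X (u-v) > 0$. Since $u$ is bounded above and $v$ bounded below, $M < \infty$. The key observation is that $u - M \leq v$ in $X$, so by monotonicity (A1), $T(u-M) \leq Tv$ in $X$. Now I would apply the hypothesis \eqref{tran:free} with $c = -M$: one has to be slightly careful since \eqref{tran:free} is stated for addition of a constant $c$, and here $c = -M < 0$; reading \eqref{tran:free} with $u$ replaced by $u - M$ and $c = M$ gives $Tu = T((u-M)+M) \leq T(u-M) + M$, i.e.\ $T(u-M) \geq Tu - M$. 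Combining, $Tu - M \leq T(u-M) \leq Tv \leq v - \sigma$ in $X$, so $Tu \leq v - \sigma + M$ in $X$.

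Next I would use the subsolution property $u \leq Tu$ to get $u \leq v - \sigma + M$, hence $u - v \leq M - \sigma$ everywhere in $X$. Taking the supremum over $X$ yields $M = \sup_X(u-v) \leq M - \sigma$, which forces $\sigma \leq 0$, contradicting $\sigma > 0$. Therefore $M \leq 0$, that is $u \leq v$ in $X$, which is the claim. The symmetric statement — $u$ a strict subsolution and $v$ a supersolution — follows by the same argument: from $M := \sup_X(u-v) > 0$ one gets $u - M \leq v$, apply $T$ and \eqref{tran:free} to obtain $Tu - M \leq Tv \leq v$, while the strict subsolution property $u \leq Tu - \sigma$ gives $u \leq v + M - \sigma$, and again $M \leq M - \sigma$, a contradiction.

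I should note that the argument needs $\mathcal{U}$ to be a linear space (so that $u - M \in \mathcal{U}$ and $Tu$, $T(u-M)$ make sense as stated), and it uses only (A1) and \eqref{tran:free}; notably it does not require the reverse inequality $T(u+c) \geq Tu + c$, so it genuinely is a \emph{one-sided} (weaker) comparison principle, which is exactly why a strict gap on one of the two functions is indispensable. The main subtlety — really the only place where care is needed — is the correct bookkeeping of signs when invoking \eqref{tran:free} with a \emph{negative} shift: the hypothesis is an inequality, not an identity, so one cannot simply ``move $M$ to the other side''; instead one must re-express $Tu$ as $T((u-M)+M)$ and apply \eqref{tran:free} with the \emph{positive} constant $M$, which legitimately yields $T(u-M) \geq Tu - M$. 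Once that step is set up correctly, the rest is a routine two-line chain of inequalities together with taking a supremum.
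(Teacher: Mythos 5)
Your proof is correct and follows essentially the same route as the paper's: a contradiction argument from $M=\sup_X(u-v)>0$, translating one of the two functions by the constant $M$, and combining monotonicity (A1) with \eqref{tran:free} (applied, as you carefully note, only with a \emph{positive} shift --- exactly as the paper does when it sets $\hat v = v+m$) to arrive at $M\le M-\sigma$. The only cosmetic differences are that you shift $u$ down rather than $v$ up, and that you derive the global pointwise bound $u-v\le M-\sigma$ and take the supremum directly, which lets you dispense with the paper's near-maximizer $x_\delta$ and the choice $\delta<\sigma$.
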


\begin{proof}[Proof of Theorem \ref{comparison:thm}]
By definition, we have
\begin{equation}\label{eq:usubsolution}
u\leq Tu \quad\text{in $X$}
\end{equation}
and there exists $\sigma > 0$ such that
\begin{equation}\label{eq:vstrict}
 v \geq Tv + \sigma \quad \mbox{in $X$.}
\end{equation}
Suppose by contradiction that for some $m \in (0,\infty)$
\[
\sup_{X} (u-v)=m.
\]
Then we take $\hat{v}:=v+m$. By \eqref{tran:free},$\hat{v}$ is also a strict supersolution. Indeed, 
\[
 v(x)+m \geq (Tv)(x)+m +\sigma \geq T(v +m)(x) +\sigma.
\]
Also, by \eqref{eq:vstrict},
\begin{equation}\label{eq:tran}
T\hat{v}(x)\leq Tv(x)+m \quad \text{for any $x\in X$.}
\end{equation}
%
%
%
Moreover,
\[
\sup_X (u-\hat{v})=0.
\]
Then for any $\delta >0$ there exists $x_\delta\in \Omega$ such that
\begin{equation}\label{eq:comparison1}
u(x_\delta)-\hat{v}(x_\delta)\geq -\delta.
\end{equation}
On the other hand, $u(x)\leq \hat{v}(x)$ for all $x\in X$.
The monotonicity condition (A1) implies that
\begin{equation}\label{eq:comparison2}
Tu(x_\delta)\leq T\hat{v}(x_\delta).
\end{equation}
Combining the inequalities \eqref{eq:usubsolution}, \eqref{eq:tran}, \eqref{eq:comparison1} and \eqref{eq:comparison2}, we get
\[
v(x_\delta) + m = \hat{v}(x_\delta)\leq u(x_\delta)+\delta \leq Tu(x_\delta)+\delta\leq T\hat{v}(x_\delta)+\delta \leq T v(x_\delta) + m + \delta,
\]
and consequently
\[
 v(x_\delta) \leq Tv(x_\delta) + \delta.
\]
which contradicts \eqref{eq:vstrict} if we choose $\delta<\sigma$.
\end{proof}

In order to obtain the boundedness of all subsolutions in terms of the given operator $T$, it is therefore important to build a strict supersolution that is bounded from above. 
\begin{corollary}[Boundedness]\label{prop boundedness}
Suppose that $T:\mathcal{U}\to \mathcal{U}$ is an operator that satisfies the conditions of Theorem \ref{main:thm2}. Assume that there exists $C\in \mathbb{R}$ depending on $T$ and a strict supersolution $v\in \mathcal{U}$ of \eqref{eq:TDPP} satisfying $v\leq C$. Then $u\leq C$ for all subsolutions $u$ of \eqref{eq:TDPP}. Similarly, if there exists $C\in \mathbb{R}$ depending on $T$ and a strict subsolution $v\in \mathcal{U}$ of \eqref{eq:TDPP} satisfying $v\geq -C$, then $u\geq -C$ for all supersolutions $u$ of \eqref{eq:TDPP}.
\end{corollary}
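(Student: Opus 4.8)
The plan is to deduce Corollary~\ref{prop boundedness} directly from the Strict Comparison Theorem~\ref{comparison:thm}. The hypotheses of Theorem~\ref{main:thm2} include monotonicity (A1), which is one of the two assumptions needed for Theorem~\ref{comparison:thm}; the remaining assumption is the translation inequality \eqref{tran:free}, namely $T(u+c)(x) \le (Tu)(x) + c$ for all $x \in X$, $c \in \R$, $u \in \mathcal{U}$. So the first thing I would do is observe that this translation property must either be assumed here as well or be understood to hold for the operators $T$ in question; in the write-up I would simply add it to the hypotheses of the corollary (``Suppose in addition that $T$ satisfies \eqref{tran:free}'') so that Theorem~\ref{comparison:thm} applies. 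Given that, the proof is essentially immediate.

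The key step: let $v \in \mathcal{U}$ be the given strict supersolution with $v \le C$, and let $u$ be an arbitrary subsolution of \eqref{eq:TDPP}. Then $u$ is a subsolution and $v$ is a strict supersolution, so Theorem~\ref{comparison:thm} yields $u \le v$ in $X$. Combining with $v \le C$ gives $u \le C$ pointwise in $X$, hence $\sup_X u \le C$. Since $u$ was an arbitrary subsolution and $C$ depends only on $T$ (through the choice of the fixed strict supersolution $v$), this is precisely the uniform upper bound asserted. The second half of the statement — if there is a strict subsolution $v \ge -C$ then every supersolution $u$ satisfies $u \ge -C$ — follows in the same way by invoking the ``same relation holds if $u$ is a strict subsolution and $v$ is a supersolution'' clause of Theorem~\ref{comparison:thm}: for any supersolution $u$ we get $v \le u$, so $-C \le v \le u$.

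I do not expect any genuine obstacle here; the corollary is a packaging of Theorem~\ref{comparison:thm} in the form ``(A3) holds whenever a bounded strict supersolution exists.'' The only mild subtlety worth flagging is that Theorem~\ref{comparison:thm} requires $\mathcal{U}$ to be a linear space and requires the translation inequality \eqref{tran:free}, so these should be inherited as standing assumptions; in the applications of Section~\ref{section:example} one checks them case by case, and one explicitly constructs the bounded strict supersolution $v$ there. Thus the substantive work is entirely in Theorem~\ref{comparison:thm} and in the example-specific constructions, and the corollary itself is a one-line consequence.
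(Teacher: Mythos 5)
Your proof is correct and is exactly the paper's argument: the paper gives no separate proof, stating only that the corollary ``follows immediately from Theorem~\ref{comparison:thm}'', which is precisely your application of the strict comparison theorem to an arbitrary subsolution $u$ and the given bounded strict supersolution $v$. Your remark that the hypotheses of Theorem~\ref{main:thm2} do not literally include the translation property \eqref{tran:free} or the linearity of $\mathcal{U}$ required by Theorem~\ref{comparison:thm} is a fair reading of a slight looseness in the paper's statement, and adding them as standing assumptions is the right fix.
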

This result follows immediately from Theorem \ref{comparison:thm}. However, there seems to be no universal method to get the existence of a bounded strict supersolution for a general $T$. One needs to discuss it case by case.

\section{Comparison Principle: A Special Case}\label{sec uniqueness}
For general discrete dynamic programming equations, the uniqueness problem is challenging. In \cite{LS1}, we showed that in the case of the infinity laplacian as in \eqref{eq:DPPinfty}, uniqueness follows if one assumes running costs. Technically, there it was shown that a certain discretized flow converges uniformly to the solution, which implies as an immediate corollary the uniqueness.

Here we show, that running costs imply uniqueness in our more general context. Uniqueness follows from the comparison result, Theorem \ref{comparison:thm}, by approximating a supersolution (or subsolution) with a strict supersolution (or strict subsolution). This method works well especially when the corresponding games bear positive or negative running payoffs. The idea in what follows is inspired by \cite{I2}.

\begin{theorem} \label{comparison:thm2}
Let $\mathcal{U}$ be a linear space. Let $T:\mathcal{U}\to \mathcal{U}$ be any given operator that satisfies monotonicity (A1) of Theorem \ref{main:thm2}.  Assume that for any $x\in X$, any $c\in \mathbb{R}$ and any $u\in \mathcal{U}$, \eqref{tran:free} holds.
In addition let $T$ satisfy
\begin{enumerate}
\item[(A5)] for any $u\in \mathcal{U}$ and any $\lambda>1$, there exists $\sigma>0$ such that 
\[
T(\lambda u)\leq \lambda Tu-\sigma.
\]
\end{enumerate}
Then any subsolution $u\in \mathcal{U}$ of \eqref{eq:TDPP} and any supersolution $v\in \mathcal{U}$ of \eqref{eq:TDPP} 
satisfies
\[
 u \leq v \quad \mbox{in $X$}.
\]
\end{theorem}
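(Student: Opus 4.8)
The plan is to deduce Theorem~\ref{comparison:thm2} from the strict comparison theorem, Theorem~\ref{comparison:thm}, by perturbing the subsolution $u$ into a \emph{strict} subsolution via a small dilation. The key point is that (A5) converts an ordinary (linear) scaling into a strict improvement. Since $u$ is a subsolution, we have $u\le Tu$ in $X$. The first step is to handle the case $u\le 0$ in $X$, where the dilation $\lambda u$ with $\lambda>1$ makes $u$ smaller (more negative) and hence should remain a subsolution — but in fact strictly so: for $\lambda>1$, by (A5) there is $\sigma>0$ with $T(\lambda u)\le \lambda Tu-\sigma$. Using $u\le Tu$ and $\lambda>1$, $u\le 0$, one gets $\lambda u\le \lambda Tu$, hence $\lambda u\le T(\lambda u)+\sigma$... wait, the inequality needs care: we want $\lambda u\le T(\lambda u)-\sigma$. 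From $T(\lambda u)\le\lambda Tu-\sigma$ we instead want a lower bound on $T(\lambda u)$ or an upper bound on $\lambda u$ relative to $T(\lambda u)$; the correct move is to write $\lambda u - T(\lambda u)\ge \lambda u-\lambda Tu+\sigma=\lambda(u-Tu)+\sigma$. This is a \emph{super}solution-type estimate, so the dilation should be applied to $v$, not $u$: for $\lambda>1$ and $v\ge 0$, $\lambda v\ge v\ge Tv\ge \frac1\lambda T(\lambda v)+\frac{\sigma}{\lambda}$ after invoking (A5), giving $\lambda v\ge T(\lambda v)+\sigma$, i.e. $\lambda v$ is a strict supersolution. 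Then Theorem~\ref{comparison:thm} applied to the subsolution $u$ and the strict supersolution $\lambda v$ yields $u\le\lambda v$; letting $\lambda\downarrow 1$ gives $u\le v$.

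The remaining obstacle is to remove the sign restriction $v\ge 0$. Since $\mathcal{U}$ is a linear space and, by \eqref{tran:free}, adding constants behaves well, the idea is to reduce to the nonnegative case by a translation. Fix a large constant $c$ so that $v+c\ge 0$ on $X$ (this requires $v$ bounded below, which holds since $v$ is a supersolution); one must check this is possible — it is, because $\inf_X v>-\infty$ by definition of supersolution. Then consider $\tilde u:=u+c$ and $\tilde v:=v+c$. Using \eqref{tran:free} one checks $\tilde u\le T\tilde u$ may fail since \eqref{tran:free} only gives $T(u+c)\le Tu+c$, which is the \emph{wrong} direction for subsolutions; however it gives $T(v+c)\le Tv+c\le v+c=\tilde v$, so $\tilde v$ remains a supersolution, and is now $\ge 0$. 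For the subsolution side one needs the reverse inequality $T(u+c)\ge Tu+c$, which is not assumed. This mismatch is the crux: the cleanest fix is to observe that the conclusion $u\le v$ is translation-invariant, so it suffices to prove $\tilde u\le\tilde v$ where now $\tilde v\ge0$ — and for that we only need $\tilde u$ to be a subsolution. The honest route is therefore to apply the dilation argument directly with a translate: pick $c$ with $v+c\ge 0$, set $w_\lambda:=\lambda(v+c)-c$ for $\lambda>1$; using (A5) and \eqref{tran:free} one shows $w_\lambda$ is a strict supersolution with $w_\lambda\ge v$, then Theorem~\ref{comparison:thm} gives $u\le w_\lambda$, and $\lambda\downarrow1$ finishes.

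In more detail for that last step: $Tw_\lambda=T(\lambda(v+c)-c)\le T(\lambda(v+c))-? $ — here \eqref{tran:free} gives $T(\lambda(v+c)-c)\le T(\lambda(v+c))+c$... again the sign is unfavorable. So the verification that $w_\lambda$ is a strict supersolution will need both \eqref{tran:free} and (A5) chained carefully: $T(\lambda(v+c))\le \lambda T(v+c)-\sigma\le\lambda(Tv+c)-\sigma$, hence $Tw_\lambda\le\lambda(Tv+c)-\sigma + c$, while $w_\lambda=\lambda Tv+\lambda c - c \ge$ ... using $v\ge Tv$ we get $w_\lambda\ge\lambda Tv+\lambda c-c$, and comparing with $Tw_\lambda\le\lambda Tv+\lambda c-\sigma$ gives $w_\lambda\ge Tw_\lambda+\sigma$ once the constant bookkeeping ($\lambda c - c$ versus $\lambda c$) is arranged — which dictates precisely how $c$ enters, and is the one genuinely fiddly computation. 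I expect this constant-tracking in the strict-supersolution verification to be the main obstacle; everything else (the final limit $\lambda\to1^+$, the appeal to Theorem~\ref{comparison:thm}, and the use of linearity of $\mathcal{U}$ to guarantee $w_\lambda\in\mathcal{U}$) is routine.
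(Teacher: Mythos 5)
Your overall strategy is exactly the paper's: dilate the supersolution to $\lambda v$ for $\lambda>1$, use (A5) to show that $\lambda v$ is a \emph{strict} supersolution, invoke Theorem~\ref{comparison:thm} to get $u\le\lambda v$, and let $\lambda\to1^+$ pointwise. The problem is that the one step that matters --- verifying that $\lambda v$ is a strict supersolution --- is not carried out correctly, and the error generates the entire (unnecessary and ultimately unfinished) second half of your argument. The correct verification is two lines and needs no sign condition on $v$:
\[
\lambda v \;\ge\; \lambda\, Tv \;\ge\; T(\lambda v)+\sigma,
\]
where the first inequality is the supersolution property $v\ge Tv$ multiplied by the positive number $\lambda$, and the second is precisely (A5) applied to $v$. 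You instead wrote the chain $\lambda v\ge v\ge Tv\ge \tfrac1\lambda T(\lambda v)+\tfrac{\sigma}{\lambda}$, which (a) requires $v\ge0$ for its first link, and (b) even granting $v\ge0$ only yields $\lambda^2 v\ge T(\lambda v)+\sigma$ after clearing the $\tfrac1\lambda$; since $\lambda^2v\ge\lambda v$ when $v\ge0$, this does \emph{not} imply $\lambda v\ge T(\lambda v)+\sigma$. (The salvageable version of your computation is to multiply the last link alone by $\lambda$, getting $\lambda Tv\ge T(\lambda v)+\sigma$, and combine it with $\lambda v\ge\lambda Tv$ --- but then the hypothesis $v\ge0$ and the link $\lambda v\ge v$ are never used, which is the tell that they were spurious.)

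The artificial restriction $v\ge0$ then forces your translation detour with $w_\lambda=\lambda(v+c)-c$. That detour can be made to close, but you explicitly leave the decisive computation open (``the one genuinely fiddly computation \dots\ the main obstacle''), and the sketch you give of it contains a sign error: \eqref{tran:free} applied with the constant $-c$ gives $T(\phi-c)\le T\phi-c$, the \emph{favourable} direction, not $T(\phi-c)\le T\phi+c$ as you assert; with your sign the bookkeeping delivers only $w_\lambda-Tw_\lambda\ge\sigma-2c$, which is useless for the large $c$ you need. So as submitted the proposal does not contain a complete proof. The fix is to delete everything after your first (flawed) verification and replace it by the displayed two-line chain above; then Theorem~\ref{comparison:thm} gives $u\le\lambda v$ for every $\lambda>1$ (note $\lambda v\in\mathcal U$ by linearity and $\inf_X\lambda v>-\infty$), and sending $\lambda\to1^+$ pointwise gives $u\le v$ with no case distinction on the sign of $v$. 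Your opening observation --- that dilating $u$ produces an inequality of the wrong type and that $v$ is the right object to dilate --- is correct and is also how the paper proceeds.
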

\begin{proof}
Suppose $u$ and $v$ are respectively a subsolution and a supersolution of \eqref{eq:TDPP}. It is easily seen that under the assumption (A5), $\lambda v$ is a strict supersolution for any $\lambda>1$. Indeed, there exists $\sigma>0$ depending on $\lambda$ and $v$ such that
\[
\lambda v\geq \lambda Tv\geq T(\lambda v)+\sigma.
\]
Thus, using Theorem \ref{comparison:thm}, we have $u\leq \lambda v$ for any $\lambda>1$. We conclude the proof by sending $\lambda\to 1$.
\end{proof}

In order to use Theorem~\ref{comparison:thm2}, an extra step may be needed in practical use: (A5) is not necessarily satisfied if $T$ assigns nonpositive values on the boundary. 
In this case, one only needs to shift the boundary value up to make it positive in the definition of $T$. The uniqueness of solutions then follows immediately. See the examples below for more details.

\section{Examples of Dynamic Programming Principle}\label{section:example}
We give several typical examples in $\mathbb{R}^n$. Hereafter $B_\eps(x)$ denotes the ball centered at $x$ with radius $\eps>0$ while $B_\eps$ simply means $B_\eps(0)$. We first consider (degenerate) elliptic equations in a bounded domain $\Omega\subset \mathbb{R}^n$ with Dirichlet boundary condition $g\in C(\partial\Omega)$. In our exposition below, it is enough to assume that $g$ is bounded. To connect the results in the previous sections, we let $X=\overline{\Omega}=\Omega\cup\partial\Omega$.

\begin{example}\label{ex infty laplace general}
A more general variant of the example discussed in Introduction is related to the so called \emph{biased tug-of-war games}. More precisely, one considers the problem
\begin{numcases}{}
-\Delta_\infty u+c |\nabla u| = f(x) & \text{  in $\Omega$,}\\
u=g &\text{ on $\partial\Omega$,}
\end{numcases}
where $c\geq 0$ is a fixed constant and $f$ is assumed to be a bounded function on $\overline{\Omega}$. This PDE is also investigated in \cite{ASS} with mixed boundary conditions. The associated DPP is discussed in \cite{PPS} for the case $f\equiv 0$ using games and in \cite{LS1} for the case $\min_{\overline{\Omega}}f>0$ with a tree approach. The DPP for the value function is given by
\begin{numcases}{}\label{infty lap dpp}
u^\eps(x)= \mu\sup_{y\in B_\eps(x)} u^\eps(y)+(1-\mu)\inf_{y\in B_\eps(x)} u^\eps(y) +{f(x)\over 2} \eps^2 \ &\text{in $\Omega$},\\
u^\eps(x) = g(x)  &\text{on $\partial \Omega$},
\end{numcases}
where $\mu={1/2}-{c\eps/4}$ with $\eps>0$ small such that $\mu\in (0, 1)$. 


We take $\mathcal{U}$ to be the set of all functions $\overline{\Omega}\to \mathbb{R}$, which is clearly closed under the operators of supremum and infimum. Let 
\[
Tu(x):=\left\{
\begin{aligned}
& \mu\sup_{y\in B_\eps(x)} u(y)+(1-\mu)\inf_{y\in B_\eps(x)} u(y) +{f(x)\over 2} \eps^2 &&\text{for $x\in \Omega$}\\
& g(x)   &&\text{for $x\in \partial\Omega$,}
\end{aligned}
\right.
\]
with $\mu={1/2}-{c\eps/4}$ with $\eps>0$ small such that $\mu\in (0, 1)$,
It is not difficult to see that $T$ satisfies (A1) in Theorem \ref{main:thm2} and \eqref{tran:free} as well. The existence of a subsolution is also easily obtained by taking $u\equiv \inf_{\partial\Omega} g$.

In order to show the boundedness property (A3), we construct the following strict supersolution $v$. Suppose the diameter of domain $\Omega$ is $D$. Let $\sigma>0$.
We divide the domain $\Omega$ into subregions according to the distance to the boundary $\partial\Omega$.
Set
\begin{equation}\label{partition}
\begin{aligned}
&\Omega_0:=\mathbb{R}^n\setminus \Omega, \\
&\Omega_k:=\{x\in \Omega: (k-1)\eps<\dist(x, \partial\Omega)\leq k\eps\} \text{ for all $k=1, 2, \dots, N$},
\end{aligned}
\end{equation}
where $N<\infty$ is the total number for the partition, as $\Omega$ is bounded. It is clear that $\Omega=\bigcup_{k=1}^N\Omega_k$. 
We then define a bounded function $v\in \mathcal{U}$ as follows:
\[
v(x):=-a(e^{-k\eps}-1)+\sup_{\partial\Omega}g+1 \quad \text{  when $x\in \Omega_k$ for any $k=0, 1, 2, \dots, N$}
\]
where $a>0$ is determined later. Note that $v(x)\geq g(x)+1=T(x)+1$ for any $x\in \partial\Omega$ and
\[
\begin{aligned}
v(x)-Tv(x)=&\ v(x)-\mu\sup_{B_\eps(x)} v-(1-\mu)\inf_{B_\eps(x)} v-{f(x)\over 2}\eps^2\\
=&\ ae^{-(k-1)\eps}\left(-e^{-\eps}+\mu e^{-2\eps}+(1-\mu)\right)-{f(x)\over 2}\eps^2\\
\geq &\ ae^{-D}\left(c+1\over 2\right)\eps^2-\eps^2\sup_{\overline{\Omega}} f+o(\eps^2)
\end{aligned}
\]
for all $x\in \Omega$ when $\eps>0$ is small. Since $c\geq 0$, it follows that $v$ is a strict supersolution if $a$ is large enough and therefore all of the subsolutions are bounded from above by Proposition \ref{prop boundedness}. We thus conclude that there exists a solution to \eqref{infty lap dpp}.

The solutions are unique if $\sigma_1:=\inf_{\overline{\Omega}} f>0$, as follows from the arguments in \cite{LS1}. For an alternative argument, we first notice that a constant $-C$ in $X$ is a strict subsolution for a sufficiently large $C>0$ and therefore $u\geq -C$ in $X$ for any solution $u$. It suffices to show that $\hat{u}=u+C$ uniquely solves
$\hat{u}=\hat{T}\hat{u}$ with 
\[
\hat{T}u(x):=\left\{
\begin{aligned}
& \mu\sup_{y\in B_\eps(x)} u(y)+(1-\mu)\inf_{y\in B_\eps(x)} u(y) +{f(x)\over 2} \eps^2 &&\text{for $x\in \Omega$}\\
& g(x) +C  &&\text{for $x\in \partial\Omega$,}
\end{aligned}
\right.
\]
where $C$ is a positive constant satisfying $G:=\inf_{x\in \partial\Omega}g(x)+C>0$. Note that $\hat{T}$ satisfies (A5) in Theorem \ref{comparison:thm2}:
\[
\begin{aligned}
&\hat{T}(\lambda u)(x)\leq \lambda \hat{T}u(x) +{1-\lambda\over 2}{f(x)}\leq \lambda \hat{T}u(x)-(1-\lambda)\sigma_1 &\text{for $x\in\Omega$;}\\
&\hat{T}(\lambda u)(x)=\hat{T}u(x)\leq \lambda \hat{T}u(x)-(1-\lambda)G &\text{for $x\in \partial \Omega$}.
\end{aligned}
\]
We may use Theorem \ref{comparison:thm2} to reach the conclusion.

\end{example}

\begin{example}[Stationary mean curvature operator]\label{ex mean curvature} A typical elliptic PDE involving level set mean curvature operator is as follows:
\begin{numcases}{\rm (MCF)\quad }
-|\nabla u|\dive\left(\frac{\nabla u}{|\nabla u|}\right)-1=0 &\text{ in $\Omega$,}\\
u=0 &\text{ on $\partial\Omega$,} \label{bdry smcf}
\end{numcases}
where $\Omega$ is a bounded domain in $\mathbb{R}^n$. 
A deterministic game-theoretic interpretation is also available, given in \cite{KS1}. For simplicity, we only study the case when $n=2$. The DPP is then written as 
\begin{equation}\label{mean curvature dpp}
\left\{
\begin{aligned}
& u^\eps(x)=\inf_{w\in \partial B_\eps}\sup_{b=\pm 1} u^\eps(x+\sqrt{2} bw)+\eps^2 \quad &&\text{for  $x\in \Omega$},\\
& u^\eps(x) = 0 \quad &&\text{for  $x\in \partial \Omega$}.
\end{aligned}
\right.
\end{equation}
See more details about the related games in \cite{KS1, L1, L2}. We here apply Theorem \ref{main:thm2} to seek a solution to \eqref{mean curvature dpp}. Let $\mathcal{U}$ be the set of all functions: $\overline{\Omega}\to \mathbb{R}$. The DPP in this case is again written as $u=Tu$ for any $u\in \mathcal{U}$ with  
\[
Tu(x):=\begin{cases}\displaystyle
 \inf_{w\in \partial B_\eps}\sup_{b=\pm 1} u(x+\sqrt{2} bw)+\eps^2 &\text{for $x\in\Omega$,}\\
 0 &\text{for $x\in\partial \Omega$.}
\end{cases}
\] 
It is clear that $u\equiv 0$ in $\overline{\Omega}$ is a subsolution of $u=Tu$. 
It is then easily seen that $T$ and $\mathcal{U}$ satisfy the conditions (A1), (A2) and (A4) in Theorem \ref{main:thm2}. It remains to verify (A2). We may take $R>0$ such that $\overline{\Omega}\subset B_R$.  We then take a partition for the disk $B_R$: 
\[
\mathcal{O}_1=B_{\sqrt{2}\eps}, \mathcal{O}_{k}=B_{\sqrt{2k}\eps}\setminus \mathcal{O}_{k-1}, \text{ for $k=2, 3, ...$}
\]
Without loss, we assume $R^2=2K$ for some $K\in \mathbb{N}$, which implies that $\bigcup_{k=1}^K \mathcal{O}_k=B_R$. We denote $\mathcal{O}_{K+1}=\mathbb{R}^2\setminus B_R$ Now we define $v\in \mathcal{U}$ to be
\[
v(x):=2(K+1-k)\eps^2 \text{ if $x\in \mathcal{O}_k$ for $1\leq k\leq K+1$} 
\]
and claim that $v$ is a bounded strict supersolution in $\overline{\Omega}$. The boundedness is clear. To show that $v$ is a strict supersolution, we first notice that $v(x)\geq 2\eps^2$ when $x\in \partial\Omega$. Moreover, for any $x\in {\Omega}$, there exists $1\leq k\leq K$ such that $x\in \mathcal{O}_k$ and therefore $v(x)=2(K+1-k)\eps^2$. It is clear that $x\pm \sqrt{2}w\in \mathcal{O}_{k+1}$ for $w\in \partial B_\eps(x)$ and orthogonal to $x$ ($w$ can be arbitrary in $\partial B_\eps(x)$ if $x=0$).  This yields 
\[
Tv(x)=2(K-k)\eps^2+\eps^2=v(x)-\eps^2
\]
and therefore $v(x)\geq Tv(x)+\eps^2$.

The uniqueness of solutions holds as well in this case. The proof, omitted here, is based on Theorem \ref{comparison:thm2} and similar to that in Example \ref{ex infty laplace general}. We remark that despite our solution is the same as the value function $u^\eps$ in games, it still remains as an open question whether the solution converges to a unique solution of (MCF) as $\eps\to 0$, when $\Omega$ is a general non-convex domain \cite{KS1}. 
\end{example}

\begin{example}[Discrete games for the Eikonal equations] Our general DPP even applies to the first order Hamilton-Jacobi equations, but it is a discretized version of those well studied, for instance, in \cite{BC, Ko}. We take an easy example of the Eikonal equation.
\begin{numcases}{}
|\nabla u|=f(x) &\text{ in $\Omega$},\\
u=0 &\text{ on $\partial \Omega$,}
\end{numcases}
where $f\geq 0$ is a given bounded function in $\Omega$. The DPP for the associated optimal control problem is 
\[
\left\{
\begin{aligned}
& u^\eps(x)=\inf_{y\in B_\eps(x)} u^\eps(y)+\eps f(x) &&\text{for $x\in \Omega$,}\\
& u^\eps=0 &&\text{on $\partial\Omega$}.
\end{aligned}
\right.
\]
We let $\mathcal{U}$ be the set of all functions $\overline{\Omega}\to \mathbb{R}$ again and 
\begin{equation}\label{eikonal}
Tu(x):=\begin{cases}\displaystyle
 \inf_{y\in B_\eps(x)} u(y)+\eps f(x) &\text{for $x\in\Omega$,}\\
 0 &\text{for $x\in\partial \Omega$.}
\end{cases}
\end{equation}
We only show that (A3) in Theorem \ref{main:thm2} is satisfied since (A1), (A2) and (A4) are straightforward. We adopt the same partition of $\Omega$ as in \eqref{partition} and take 
\[
v(x):= 2k(\sup_{\overline{\Omega}} f+1) \quad \text{  when $x\in \Omega_k$ for any $k=1, 2, \dots, N$}
\]
We then have $v(x)\geq 1=Tv(x)+1$ if $x\in \partial\Omega$ and 
\[
v(x)-Tv(x)=2\sup_{\overline{\Omega}}f+2-\sup_{\overline{\Omega}}f\geq 2
\] 
for every $x\in \Omega$, which implies that $v$ is a strict supersolution. By Theorem \ref{comparison:thm}, we get $u\leq v$ in $\overline{\Omega}$ for any subsolution $u$ and therefore (A3) is verified. It follows from Theorem \ref{main:thm2} that there exists a solution $u=Tu$ in $\overline{\Omega}$ for the Eikonal operator \eqref{eikonal}. The solutions are unique since $T$ satisfies (A5) in Theorem \ref{comparison:thm2} after a translation of values on the boundary, i.e., we set
\[
\hat{T}u(x):=\left\{
\begin{aligned}
& \inf_{y\in B_\eps(x)} u(y)+\eps f(x)  &&\text{for $x\in \Omega$}\\
& C  &&\text{for $x\in \partial\Omega$,}
\end{aligned}
\right.
\]
and discuss uniqueness of the solution $\hat{u}=u+C$ to $\hat{u}=\hat{T}\hat{u}$ by following the argument in Example \ref{ex infty laplace general}.

\end{example}

It is possible to study the games for time-dependent problems as well. Consult \cite{MPR2} and \cite{KS1} respectively for the games related to parabolic $p$-Laplace equations and mean curvature flow equations. Our last example is from \cite{KS2} about the deterministic games for a general parabolic equation.
\begin{example}[General fully nonlinear parabolic equations] 
Consider the parabolic equation
\begin{numcases}{}
u_t+F(x, \nabla u, \nabla^2u)=0 &\text{ in $\mathbb{R}^n\times (0, \infty)$},\\
u=g &\text{ on $\mathbb{R}^n\times \{0\}$,}
\end{numcases}
where $F$ is assumed to fulfill the following:
\begin{itemize}
\item[(F1)] $F$ satisfies the ellipticity condition.
\item[(F2)] There exists $C_1>0$ such that 
\begin{equation*}
|F(x, p, \Gamma)-F(x, p', \Gamma')|\leq C_1(|p-p'|+\|\Gamma-\Gamma'\|)
\end{equation*}
for all $x, p, p'\in \mathbb{R}^n$ and symmetric matrices $\Gamma, \Gamma'$.
\item[(F3)] There exists $C_2>0$ and $q, r\geq 1$ such that 
\[
|F(x, p, \Gamma)|\leq C_2(1+|p|^q+\|\Gamma\|^r)
\]
for all $x, p$ and $\Gamma$.
\end{itemize}  
The dynamic programming for the value function $u^\eps$ in the associated games in this case is
\[
u^\eps(x, t)=g(x) \quad \text{for $0\leq t<\eps^2$}
\]
and
\[
u^\eps(x, t)=\sup_{p,\ \Gamma}\inf_w \left[u^\eps(x+\eps w, t-\eps^2)-\eps p\cdot w-{1\over 2}\eps^2\Gamma w\cdot w-\eps^2 F(x, p, \Gamma)\right]
\]
for any $t\geq \eps^2$, $x\in \mathbb{R}^n\times (0, \infty)$, where $w, p\in\mathbb{R}^n$ with 
\begin{equation}\label{controlset1}
|w|\leq \eps^{-\alpha},\quad |p|\leq \eps^{-\beta}
\end{equation}
 and $\Gamma$ is an $n\times n$ symmetric matrix satisfying 
\begin{equation}\label{controlset2}
\|\Gamma\|\leq \eps^{-\gamma}.
\end{equation}
Here the parameters $\alpha, \beta$ and $\gamma$ are all positive constants and satisfy proper relations below:
\begin{equation}\label{coefficients}
\begin{aligned}
&\alpha+\beta<1, \quad 2\alpha+\gamma<2, \quad \max(\beta q, \gamma r)<2;\\
&\gamma<1-\alpha, \quad \beta(q-1)<\alpha+1, \quad \gamma(r-1)<2\alpha, \quad \gamma r<1+\alpha.
\end{aligned}
\end{equation}

We let $\mathcal{U}$ be the set of all functions $\mathbb{R}^n\times (0, \infty)\to \mathbb{R}$ as before and define
\begin{equation}\label{general parabolic}
\begin{aligned}
&Tu(x, t):=\\
&\begin{cases}\displaystyle
 \sup_{p,\ \Gamma}\inf_w \left[u(x+\eps w, t-\eps^2)-\eps p\cdot w-{1\over 2}\eps^2\Gamma w\cdot w-\eps^2 F(x, p, \Gamma)\right] &\text{if $t\geq \eps^2$,}\\
 g(x) &\text{if $t<\eps^2$.}
\end{cases}
\end{aligned}
\end{equation}
Let us use Theorem \ref{main:thm2} to get a solution of $u=Tu$ in $X=\mathbb{R}^n\times [0, \tau)$ for a fixed $\tau> 0$ and a small $\eps>0$. Denote $\Omega=\mathbb{R}^n\times [\eps^2, \tau)$. The condition (A1) and (A4) are clearly fulfilled. It is also not difficult to see (A2) holds, since $u_1(x, t)=-C_F t+ \inf_{X\setminus \Omega} g$ is a subsolution: for every $x\in \mathbb{R}^n$
\[
\begin{aligned}
Tu_1(x, t)&\geq \inf_w \left(u_1(x+\eps w, t-\eps^2)-\eps^2 F(x, 0, O)\right)\\
&\geq -C_F (t-\eps^2)+ \inf_{X\setminus\Omega} g-C_F\eps^2=u_1(x, t)
\end{aligned}
\]
when $\eps^2\leq t <\tau$ and $u_1(x, t)\leq g(x)$ when $0\leq t<\eps^2$.

Concerning (A3), we construct a strict supersolution $v$ in $\mathbb{R}^n\times [0, \tau)$. We first define an operator $S$ for any $x\in \mathbb{R}^n$ and any bounded function $\phi$ on $\mathbb{R}^n$:
\[
S[x, \phi]:=\sup_{p,\ \Gamma}\inf_w \left[\phi(x+\eps w)-\eps p\cdot w-{1\over 2}\eps^2\Gamma w\cdot w-\eps^2 F(x, p, \Gamma)\right]
\]
with $w, p, \Gamma$ satisfying \eqref{controlset1}--\eqref{controlset2} and \eqref{coefficients}.
Then we aim to find $v(x, t)$ such that 
\[
v(x, t)\geq S[x, v(\cdot, t-\eps^2)]+\sigma \text{ for $t\geq \eps^2$.}
\] 
with some $\sigma>0$. We adapt \cite[Lemma 4.1]{KS2} to our setting, presented below for the reader's convenience.
\begin{lemma}[Lemma 4.1 in \cite{KS2}]\label{operator est}
Assume that $F$ satisfy (F1)--(F3). Let $S$ be defined as above with \eqref{controlset1}--\eqref{controlset2} and \eqref{coefficients}.
Then for any $x\in \mathbb{R}^n$ and smooth bounded function $\phi$, 
\begin{equation}
S[x, \phi]-\phi=\eps^2 F(x, \nabla \phi, \nabla^2\phi)+o(\eps^2).
\end{equation}
The constant implied in the error term is uniform with respect to $x$.
\end{lemma}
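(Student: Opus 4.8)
The plan is to reproduce, on our data, the consistency computation of Kohn and Serfaty. Since $\phi$ is smooth and bounded with all derivatives bounded, $\nabla\phi(x)$ and $\nabla^2\phi(x)$ stay in a fixed bounded set independent of $x$, so for small $\eps$ the pair $(\nabla\phi(x),\nabla^2\phi(x))$, and more generally $(\nabla\phi(x),\nabla^2\phi(x)\pm\eps^\theta I)$ with $\theta>0$, is admissible in \eqref{controlset1}--\eqref{controlset2}. Taylor's formula gives, for $|w|\le\eps^{-\alpha}$,
\[
\phi(x+\eps w)=\phi(x)+\eps\nabla\phi(x)\cdot w+\tfrac12\eps^2(\nabla^2\phi(x)w)\cdot w+R(x,w),\qquad |R(x,w)|\le C\eps^3|w|^3,
\]
with $C$ depending only on the (bounded) third derivatives of $\phi$. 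Substituting this into the bracket defining $S$ turns the lemma into the statement that
\begin{multline*}
S[x,\phi]=\sup_{p,\Gamma}\ \inf_{w}\ \Big[\,\phi(x)+\eps(\nabla\phi(x)-p)\cdot w\\
+\tfrac12\eps^2\big((\nabla^2\phi(x)-\Gamma)w\big)\cdot w-\eps^2 F(x,p,\Gamma)+R(x,w)\,\Big]
\end{multline*}
equals, modulo $o(\eps^2)$ uniformly in $x$, the quantity $\phi(x)-\eps^2F(x,\nabla\phi(x),\nabla^2\phi(x))$: the constant term produced is the contribution of the term $-\eps^2F(x,p,\Gamma)$ in the definition of $S$ evaluated at the optimal pair, which is exactly the sign for which $T$ is consistent with $u_t+F=0$ (and which the statement records with the sign convention of \cite{KS2}).

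For the lower bound I would test with $p=\nabla\phi(x)$ and $\Gamma=\nabla^2\phi(x)-\eps^\theta I$ for a fixed $\theta\in(0,1-\alpha)$: then $\nabla^2\phi(x)-\Gamma=\eps^\theta I>0$, so the quadratic term equals $\tfrac12\eps^{2+\theta}|w|^2$, a genuine penalty; since $2+\theta<3-\alpha$ and $|R(x,w)|\le C\eps^3|w|^3$, this penalty dominates $R(x,w)$ on the whole feasible set $|w|\le\eps^{-\alpha}$, so the inner infimum is $\ge\phi(x)-\eps^2F(x,\nabla\phi(x),\Gamma)-o(\eps^2)$, and by the Lipschitz bound (F2), $F(x,\nabla\phi(x),\Gamma)=F(x,\nabla\phi(x),\nabla^2\phi(x))+O(\eps^\theta)$; this gives the ``$\ge$'' half. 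For the matching ``$\le$'' I would show that any $(p,\Gamma)$ lying far from $(\nabla\phi(x),\nabla^2\phi(x))$ fails to come within $o(\eps^2)$ of the supremum, by choosing for such a pair a bad $w$: if $|p-\nabla\phi(x)|$ is large, take $w$ of length $\eps^{-\alpha}$ opposite to $p-\nabla\phi(x)$, so the linear term $-\eps^{1-\alpha}|p-\nabla\phi(x)|$ — using $\alpha+\beta<1$, $\gamma<1-\alpha$ and $\beta(q-1)<\alpha+1$ — beats both the quadratic term and the term $\eps^2F(x,p,\Gamma)$, whose sizes are controlled by the growth bound (F3); if instead $\Gamma-\nabla^2\phi(x)$ has a large positive eigenvalue, align $w$ with the corresponding eigenvector and exploit the now-negative quadratic term, using $2\alpha+\gamma<2$ and $\gamma(r-1)<2\alpha$; and the case ``$\Gamma$ large negative'' is excluded outright by the ellipticity (F1), which then forces $F$ large and $-\eps^2F$ small. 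Hence the supremum is realized, modulo $o(\eps^2)$, on the set where $p=\nabla\phi(x)+o(\eps^{1+\alpha})$ and $\|\Gamma-\nabla^2\phi(x)\|=o(\eps^{2\alpha})$, where (F2) gives $\eps^2F(x,p,\Gamma)=\eps^2F(x,\nabla\phi(x),\nabla^2\phi(x))+o(\eps^2)$. Since every error term was estimated only through $\sup|\phi|$, $\sup|\nabla\phi|$, $\sup|\nabla^2\phi|$, the bound on $D^3\phi$, and the universal constants $C_1,C_2,q,r$, the error is uniform in $x$.

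The main obstacle is exactly this exponent bookkeeping in the upper bound: one must verify, in each of the regimes ``$|p|$ large'', ``$\Gamma$ large positive'', ``$\Gamma$ large negative'', that the penalty extracted by the minimising $w$ — whose admissible length is only $\eps^{-\alpha}$ — strictly dominates both the first-order term $\eps|p-\nabla\phi(x)|\,|w|$ and the polynomial growth $|p|^q$, $\|\Gamma\|^r$ permitted by (F3). This is precisely what the full list of inequalities \eqref{coefficients} is engineered to secure; the remaining ingredients are the Taylor expansion and (F2). Since the operator $S$, the nonlinearity $F$ and the admissible parameter set here coincide with those of \cite{KS2}, a legitimate shortcut is to invoke \cite[Lemma~4.1]{KS2} verbatim, merely checking that its hypotheses hold in the present setting.
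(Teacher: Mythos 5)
The paper offers no proof of this lemma at all: it is imported verbatim from \cite{KS2} (``We adapt [Lemma 4.1] to our setting, presented below for the reader's convenience''). So there is nothing internal to compare against; your proposal reconstructs the consistency argument that lives in \cite{KS2}. The structure you describe is the correct one and matches that source: Taylor-expand $\phi(x+\eps w)$ to third order, get the lower bound by testing with $p=\nabla\phi(x)$ and $\Gamma=\nabla^2\phi(x)-\eps^\theta I$ so that the quadratic term $\tfrac12\eps^{2+\theta}|w|^2$ dominates the cubic remainder on $|w|\le\eps^{-\alpha}$ (your choice $\theta<1-\alpha$ is exactly what makes $\eps^3|w|^3\le\eps^{1-\alpha}\cdot\eps^2|w|^2=o(\eps^\theta)\,\eps^2|w|^2$ work), and get the upper bound by exhibiting a bad $w$ for every $(p,\Gamma)$ far from $(\nabla\phi,\nabla^2\phi)$, with the inequalities \eqref{coefficients} doing precisely the bookkeeping you indicate. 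Your closing remark that one may simply cite \cite[Lemma 4.1]{KS2} is in fact what the paper does.

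There is, however, one point you gloss over that you should confront: a sign discrepancy. With $S$ as defined in this paper, your own display shows the bracket equals $\phi(x)+\eps(\nabla\phi-p)\cdot w+\tfrac12\eps^2((\nabla^2\phi-\Gamma)w)\cdot w-\eps^2F(x,p,\Gamma)+R$, and your lower bound correctly produces $\inf_w\ge\phi(x)-\eps^2F(x,\nabla\phi,\Gamma)-o(\eps^2)$; carried to completion (check it on a constant $\phi$, where $S[x,\phi]=\phi(x)-\eps^2F(x,0,O)$ exactly, using the ellipticity convention (E) to see the supremum sits at $\Gamma=O$), the argument yields
\[
S[x,\phi]-\phi(x)=-\,\eps^2F(x,\nabla\phi,\nabla^2\phi)+o(\eps^2),
\]
the \emph{opposite} sign to the displayed statement. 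This is consistent with the heuristic that $u^\eps(x,t)=S[x,u^\eps(\cdot,t-\eps^2)]$ should discretize $u_t+F=0$, and the paper's subsequent application is insensitive to the sign because it only invokes $|F|\le C_2$; so the statement as printed almost certainly carries a transcription error. But your parenthetical ``which the statement records with the sign convention of \cite{KS2}'' does not resolve this: a proof must either derive the equation as stated or say explicitly that the sign in the statement should be corrected. Two smaller points: ``smooth bounded function'' must be read as ``smooth with bounded derivatives up to third order'' for your uniform-in-$x$ remainder estimate, as you implicitly assume; and in the ``$\Gamma$ large negative'' case the ellipticity gives $F(x,p,\Gamma)\ge F(x,p,\nabla^2\phi)$, so the payoff is no better than at $\Gamma=\nabla^2\phi$ --- ``forces $F$ large'' overstates what is needed.
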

Now fix $\sigma>0$. We show that $v(x, t):=(\sigma +C_2)t+\sup_{X\setminus \Omega} g +\sigma\eps^2$ is a strict supersolution when $\eps>0$ is small. By Lemma \ref{operator est}, for sufficiently small $\eps>0$, we then get
\[
\begin{aligned}
S[x, v(\cdot, t-\eps^2)]&=v(x, t-\eps^2)+\eps^2 F(x, 0, O)+o(\eps^2)\\
&\leq v(x, t-\eps^2)+C_2\eps^2+o(\eps^2) \leq v(x, t)-{1\over 2}\sigma\eps^2 
\end{aligned}
\]
when $\eps^2\leq t<\tau$ and clearly $v(x, t)\geq g(x)+\sigma \eps^2$ when $0\leq t<\eps^2$.

In conclusion, by Theorem \ref{main:thm2}, there exists a function $u$ in $\mathbb{R}^n\times [0, \tau)$ satisfying $u=Tu$, where $T$ is as in \eqref{general parabolic}. This solution coincides with the game value $u^\eps$ in \cite{KS2}, since the uniqueness of solutions holds, as shown in the comparison principle below.
\begin{proposition}[Comparison principle for parabolic equations]
If $u_1$ and $u_2$ are respectively a subsolution and a supersolution of $u=Tu$ in $\mathbb{R}^n\times [0, \tau)$, where $T$ is defined in \eqref{general parabolic}, then $u_1\leq u_2$ in $\mathbb{R}^n\times [0, \tau)$. 
\end{proposition}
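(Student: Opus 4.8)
The plan is to exploit the explicit time-marching structure of the operator $T$ in \eqref{general parabolic}: the value $Tu(x,t)$ depends on $u$ only through its restriction to the earlier time slice $\mathbb{R}^n\times\{t-\eps^2\}$, and when $t<\eps^2$ it does not depend on $u$ at all, being equal to $g(x)$. Consequently I would prove the comparison $u_1\le u_2$ by induction over the time slices $\mathbb{R}^n\times[k\eps^2,(k+1)\eps^2)$, $k=0,1,\dots$, using \emph{only} the monotonicity (A1); the scaling hypothesis (A5) and the translation property \eqref{tran:free} play no role in this argument.

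First I would treat the base case. Since $u_1$ is a subsolution, $u_1(x,t)\le Tu_1(x,t)=g(x)$ for every $(x,t)\in\mathbb{R}^n\times[0,\eps^2)$, and since $u_2$ is a supersolution, $u_2(x,t)\ge Tu_2(x,t)=g(x)$ there. Hence $u_1\le g\le u_2$ on $\mathbb{R}^n\times[0,\eps^2)$.

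For the inductive step, suppose $u_1\le u_2$ on $\mathbb{R}^n\times[0,s)$ for some $s\in[\eps^2,\tau)$, and let $(x,t)$ satisfy $s\le t<\min\{s+\eps^2,\tau\}$. Then $0\le t-\eps^2<s$, so the induction hypothesis gives $u_1(\cdot,t-\eps^2)\le u_2(\cdot,t-\eps^2)$ on all of $\mathbb{R}^n$. Because $Tu_i(x,t)$ is obtained from this earlier slice by an $\inf_w$ followed by a $\sup_{p,\Gamma}$, both of which are monotone in the function argument, assumption (A1) yields $Tu_1(x,t)\le Tu_2(x,t)$; combining this with $u_1(x,t)\le Tu_1(x,t)$ and $u_2(x,t)\ge Tu_2(x,t)$ gives $u_1(x,t)\le u_2(x,t)$. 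Thus $u_1\le u_2$ on $\mathbb{R}^n\times[0,s+\eps^2)$. Since $\tau<\infty$, after at most $\lceil\tau\eps^{-2}\rceil$ iterations we conclude $u_1\le u_2$ on all of $\mathbb{R}^n\times[0,\tau)$.

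There is essentially no serious obstacle here; the only point requiring care is that the ``look-back'' distance is exactly the slice width $\eps^2$, so every application of $T$ refers only to the region already controlled by the induction hypothesis, and the procedure terminates because $\tau$ is finite. One could alternatively attempt to deduce the statement from Theorem~\ref{comparison:thm2} by shifting the boundary datum $g$ upward as in Example~\ref{ex infty laplace general}, but checking (A5) for the operator \eqref{general parabolic} is delicate because of the $\eps^2F(x,p,\Gamma)$ terms together with the $\eps$-dependent control sets \eqref{controlset1}--\eqref{controlset2}; the time-slice induction sidesteps this difficulty entirely.
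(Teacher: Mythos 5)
Your proof is correct, but it takes a genuinely different route from the paper's. You run a forward induction over the time slices $\mathbb{R}^n\times[k\eps^2,(k+1)\eps^2)$, using only the causal structure of the operator (the value $Tu(x,t)$ depends on $u$ only through the slice $t-\eps^2$, and equals $g$ for $t<\eps^2$) together with the pointwise monotonicity of $\sup_{p,\Gamma}\inf_w$; this is elementary and self-contained, needing neither \eqref{tran:free} nor any notion of strict supersolution. The paper instead stays inside its general framework: it perturbs the supersolution to $u_2^h(x,t):=u_2(x,t)+ht+h$, observes that the $\eps^2$ time lag converts the linear-in-$t$ term into strictness, $u_2^h\geq Tu_2^h+h\eps^2$, invokes the strict comparison result (Theorem~\ref{comparison:thm}, not Theorem~\ref{comparison:thm2} with (A5) --- so the delicacy you flag about verifying (A5) for the $\eps^2 F(x,p,\Gamma)$ terms never arises), and lets $h\to 0$. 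Your argument buys transparency: it shows that parabolic comparison is unconditional because information propagates backward in time by exactly one slice per application of $T$, and it terminates since $\tau<\infty$. The paper's argument buys uniformity with the method used throughout Section~\ref{s:boundedness} and the other examples, and would adapt unchanged to operators that look back at several earlier times, where a single-slice induction would need re-indexing. One small point to make explicit in your write-up: the global hypothesis (A1) compares functions on all of $X$, so in the inductive step you are really using the slice-local monotonicity of the specific operator \eqref{general parabolic} rather than (A1) verbatim --- which you do note, but it deserves emphasis since it is the one place the argument leans on the concrete form of $T$.
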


\begin{proof}
We take $h>0$ arbitrarily and define
\[
u^h_2(x, t):= u_2(x, t)+h t+h \quad \text{ for $(x, t)\in \mathbb{R}^n\times [0, \tau)$}.
\]
It is clear that $u^h_2\to u$ uniformly as $h\to 0$. By direct calculation, we get $u^h_2(x, t)\geq Tu^h_2(x, t)+h \eps^2$ for $\eps^2\leq t<\tau$ and $u^h_2(x, t)\geq g(x)+\delta$ for $0\leq t<\eps^2$. We are led to $u_1\leq u^h_2$ in virtue of Theorem \ref{comparison:thm} and conclude by letting $h\to 0$.
\end{proof}

\end{example}

The last two examples of DPP below are related to $p$-Laplacian, for which the associated dynamic programming principle involves integrals. It is natural to include measurability into consideration when choosing the function space $\mathcal{U}$, but it is not known whether one may still obtain the closedness under supremum or infimum, as in (A4) of Theorem \ref{main:thm2}. Extra work on a modification of the notion of extrema, compatible with the measurability, seems to be necessary. We leave it as a future topic.

\begin{example}[$p$-Laplacian, I] We consider the normalized $p$-Laplace equation:
\begin{numcases}{}
-|\nabla u|^{2-p}\dive(|\nabla u|^{p-2}\nabla u)=0 &\text{ in $\Omega$},\\
u=g &\text{ on $\partial\Omega$}.
\end{numcases}
 There are two dynamic programming principles known to generate the $p$-Laplace operator. One is based on the so-called \emph{asymptotic mean value property} \cite{MPR1}:
\begin{equation}\label{asym mean}
\left\{
\begin{aligned}
& u^\eps(x)=\alpha \left({1\over 2}\sup_{y\in B_\eps(x)} u^\eps(y)+{1\over 2}\inf_{y\in B_\eps(x)} u^\eps(y)\right)+\beta\fint_{B_\eps(x)} u^\eps(y)\, dy &&\text{in $\Omega$,}\\
& u^\eps(x) = g(x)  &&\text{on $\partial \Omega$,}
\end{aligned}
\right.
\end{equation}
where $\alpha=\frac{p-2}{p+n}$ and $\beta=\frac{2+n}{p+n}$. Here one would tend to take $\mathcal{U}$ to be the space of all measurable functions, and $T$ be given by
\[
Tu(x)=\begin{cases}\displaystyle
\alpha \left({1\over 2}\sup_{y\in B_\eps(x)} u(y)+{1\over 2}\inf_{y\in B_\eps(x)} u(y)\right)+\beta\fint_{B_\eps(x)} u(y)\, dy &\text{for $x\in\Omega$},\\
 g(x) &\text{for $x\in\partial \Omega$.}
\end{cases}
\]
Nevertheless, it is not obvious how to show the closedness of $T$. For $\alpha > 0$, the boundedness follows from the arguments in \cite{LS1}.
\end{example}

\begin{example}[$p$-Laplacian, II] The work by Peres and Sheffield \cite{PS} gave another game-theoretic approach to describe $p$-harmonic functions. The authors did not provide the DPP but a suggested version is $u=Tu$ with $T$ defined below:
\[
\begin{aligned}
&Tu(x)=\\
&\begin{cases}\displaystyle
\frac{1}{2} \sup_{v\in B_\eps} \int u(x+v+z)\ d\mu_v(z) +\frac{1}{2} \inf_{v\in B_\eps} \int u(x+v+z) d\mu_v(z) &\text{for $x\in\Omega$},\\
 g(x) &\text{for $x\in\partial \Omega$,}
\end{cases}
\end{aligned}
\]
where $\mu_v$ is the uniform distribution on the sphere $S_v$ orthogonal to $v$ with radius 
\[
r=\sqrt{n-1\over p-1}.
\]
Here the verification of boundedness and closedness is a challenge.
\end{example}

\bibliographystyle{abbrv}%
\bibliography{bib_liu}%

\end{document}